\begin{document}

\title{Riemannian gradient descent for spherical area-preserving mappings}

\author{Marco Sutti\thanks{Mathematics Division, National Center for Theoretical Sciences, Taipei, Taiwan (\email{msutti@ncts.tw}).}\hspace{2mm}\orcidlink{0000-0002-8410-1372} and Mei-Heng Yueh\thanks{Department of Mathematics, National Taiwan Normal University, Taipei, Taiwan (\email{yue@ntnu.edu.tw}).}\hspace{2mm}\orcidlink{0000-0002-6873-5818}}

\date{\today}

\maketitle

\begin{abstract}
We propose a new Riemannian gradient descent method for computing spherical area-preserving mappings of topological spheres using a Riemannian retraction-based framework with theoretically guaranteed convergence.
The objective function is based on the stretch energy functional, and the minimization is constrained on a power manifold of unit spheres embedded in 3-dimensional Euclidean space.
Numerical experiments on several mesh models demonstrate the accuracy and stability of the proposed framework. Comparisons with two existing state-of-the-art methods for computing area-preserving mappings demonstrate that our algorithm is both competitive and more efficient. Finally, we present a concrete application to the problem of landmark-aligned surface registration of two brain models.

\bigskip
\textbf{Key words.} stretch-energy functional, area-preserving mapping, Riemannian optimization, Riemannian gradient descent, matrix manifolds

\medskip
\textbf{AMS subject classifications.} 68U05, 65K10, 65D18, 65D19

\end{abstract}

\section{Introduction}

This paper proposes a new method for computing spherical area-preserving mappings of topological spheres using a Riemannian optimization framework.

Area-preserving mappings can serve as parameterization of surfaces and have been applied to the field of computer graphics and medical imaging~\cite{CCR:2020}.
State-of-the-art methods for the computation of area-preserving parameterizations include diffusion-based methods~\cite{CR:2018}, optimal mass transportation-based methods~\cite{Su:2016}, and nonlinear optimization methods~\cite{Yueh:2023}. It is worth noting that most previous works consider area-preserving parameterization from simply connected open surfaces to planar regions. 
For the spherical area-preserving mapping of genus-zero closed surfaces, recently developed methods include the adaptive area-preserving parameterization method~\cite{CGK:2022} and the stretch energy minimization method~\cite{Yueh:2019}.

Riemannian optimization generalizes unconstrained, continuous optimization (defined on Euclidean space) following the same principle that Riemannian geometry is a generalization of Euclidean space. In traditional optimization methods, nonlinear matrix manifolds rely on the Euclidean vector space structure. In contrast, in the Riemannian optimization framework, algorithms and convergence analysis are formulated based on the language of differential geometry, and numerical linear algebra techniques are then used for implementation. Riemannian optimization focuses on matrix manifolds, which are manifolds in the sense of classical Riemannian geometry that admit a natural representation of their elements in the form of matrices. Retraction mappings play a vital role in this framework, as they are used to turn objective functions defined in Euclidean space into functions defined on an abstract manifold so that constraints are taken into account explicitly.
A vast body of literature on the theory and implementation of Riemannian optimization now exists.
Some of the earliest references for this field, especially for line-search methods, go back to Luenberger~\protect{\cite{Luenberger:1973}}, Gabay~\protect{\cite{Gabay:1982}} and Udrişte~\protect{\cite{Udr:1994}}. More recent literature that encompasses the findings and developments of the last three decades is~\protect{\cite{EAS:1998,AMS:2008,Boumal:2023}}.

Our Riemannian gradient descent (RGD) method involves classical components from computational geometry (simplicial surfaces and mappings) and Riemannian optimization (line search, retractions, and Riemannian gradients). To the best of the authors' knowledge, this is the first time this approach has been used in computational geometry.

In this paper, we minimize the authalic energy for simplicial mappings $f\colon \cM\to\R^{3}$ defined as \protect{\cite{Yueh:2023}}
\[
   E_A(f) = E_S(f) - \cA(f),
\]
where $E_{S}$ is the stretch energy defined as
\[
   E_{S}(f) = \frac{1}{2} \, \vecop(\f)\tr (I_3\otimes L_S(f)) \vecop(\f),
\]
$\mathcal{A}(f)$ denotes the image area of the mapping $f$, 
subject to the constraint that the image of the mapping belongs to a power manifold of $n$ unit spheres embedded in $\R^{3}$, i.e.,
\[
   \Stwon = \underbrace{\Stwo \times \Stwo \times \cdots \Stwo}_{n \ \text{times}}.
\]

\subsection{Contributions}

In this paper, we propose an RGD method to compute spherical area-preserving mappings on the unit sphere. In particular, the main contributions of this paper are as follows:
\begin{enumerate}[(i)]
    \item We combine the tools from the Riemannian optimization framework and the components from computational geometry to propose a RGD method for computing spherical area-preserving mappings of topological spheres.
    \item We explore two different line-search strategies: one using MATLAB's \texttt{fminbnd}, while the other using the quadratic/cubic interpolant approximation from \protect{\cite[\S 6.3.2]{DennisSchnabel:1996}}.
    \item We conduct extensive numerical experiments on several mesh models to demonstrate the accuracy and efficiency of the algorithm.
    \item We demonstrate the competitiveness and efficiency of our algorithm over two state-of-the-art methods for computing area-preserving mappings.
    \item We show that the algorithm is stable with respect to small perturbations in the initial mesh model.
    \item We apply the algorithm to the concrete application of landmark-aligned surface registration between two human brain models.
\end{enumerate}

\subsection{Outline of the paper}

The remaining part of this paper is organized as follows. Section~\ref{sec:simplicial_objective} introduces the main concepts on simplicial surfaces and mappings, and presents the formulation of the objective function. Section~\ref{sec:riem_optim} provides some preliminaries on the Riemannian optimization framework. Section~\ref{sec:geometry} briefly recalls the geometry of the unit sphere and then describes the tools needed to perform optimization on the power manifold.
Section~\ref{sec:Riem_grad_descent} is about the RGD method and reports known convergence results.
Section~\ref{sec:numerical_experiments} discusses the extensive numerical experiments to evaluate our algorithm in terms of accuracy and efficiency, and provides a concrete application.
Finally, we wrap our paper with conclusions and future outlook in Section~\ref{sec:conclusions}.
The appendices contain other details about the calculations of the area of the simplicial mapping, the Hessian of the objective function, and the line-search procedure used in the RGD method.

\subsection{Notation}\label{sec:notation}

In this section, we list the notations and symbols adopted in order of appearance in the paper. Symbols specific to a particular section are usually not included in this list.

\begin{table}[htbp]
   \begin{center}
      \begin{tabular}{ll}
          $ \tau $        &  A triangular face \\
          $ \left\vert \tau \right\vert $  &  The area of the triangle $ \tau $ \\
          $ \cM $         &  Simplicial surface \\
          $ \mathcal{V}(\cM) $ & Set of vertices of $\cM$ \\
          $ \mathcal{F}(\cM) $ & Set of faces of $\cM$ \\
          $ \mathcal{E}(\cM) $ & Set of edges of $\cM$ \\
          $ v_{i}, \ v_{j}, \ v_{k} $      &  Vertices of a triangular face \\
          $ f $           &  Simplicial mapping \\
          $ \f $          &  Representative matrix of $f$ \\
          $ \f_{\ell} $   &  Coordinates of a vertex $ f(v_{\ell}) $ \\
          $ \vecop $      &  Column-stacking vectorization operator \\
          $ \Stwo $       &  Unit sphere in $ \R^{3} $ \\
          $ \Stwon $      &  Power manifold of $n$ unit spheres in $ \R^{3} $ \\
          $ E_{A}(f) $    &  The authalic energy \\
          $ E_{S}(f) $    &  The stretch energy \\
          $ L_{S}(f) $    &  Weighted Laplacian matrix \\
          $ \omega_{S} $  &  Modified cotangent weights \\
          $ \cA(f) $      &  Area of the image of $f$ \\
          $ \mathrm{T}_{\x}\Stwo $    &  Tangent space to $\Stwo$ at $\x$ \\
          $ \P_{\mathrm{T}_{\x}\Stwo} $    & Orthogonal projector onto the tangent space to $\Stwo$ at $\x$ \\
          $ \P_{\mathrm{T}_{\f_{\ell}}\Stwon} $  & Orthogonal projector onto the tangent space to $\Stwon$ at $\f_{\ell}$ \\
          $\Retraction$  &  Retraction mapping \\
          $\nabla E(f)$  &  Euclidean gradient of $E(f)$ \\
          $\grad E(f)$   &  Riemannian gradient of $E(f)$
      \end{tabular}
   \end{center}
\end{table}

\section{Simplicial surfaces, mappings, and objective function} \label{sec:simplicial_objective}

We introduce the simplicial surfaces and mappings in subsection \ref{sec:simplicial_surfaces} and the objective function in subsection \ref{sec:objective_function}.

\subsection{Simplicial surfaces and mappings} \label{sec:simplicial_surfaces}

A simplicial surface $\cM$ is the underlying set of a simplicial $2$-complex $\mathcal{K}(\cM) = \cF(\cM)\cup\cE(\cM)\cup\mathcal{V}(\cM)$ composed of vertices
$$
\mathcal{V}(\cM) = \left\{v_\ell = \left( v_\ell^1, v_\ell^2, v_\ell^2 \right)\tr \in\R^3 \right\}_{\ell=1}^n,
$$
oriented triangular faces
$$
\cF(\cM) = \left\{ \tau_\ell = [v_{i_\ell}, v_{j_\ell}, v_{k_\ell}] \mid v_{i_\ell}, v_{j_\ell}, v_{k_\ell} \in\mathcal{V}(\cM) \right\}_{\ell=1}^m,
$$
and undirected edges
$$
\cE(\cM) = \left\{ [v_i,v_j] \mid [v_i,v_j,v_k]\in\cF(\cM) \text{ for some $v_k\in\mathcal{V}(\cM)$} \right\}.
$$
A simplicial mapping $f\colon \cM\to\R^3$ is a particular type of piecewise affine mapping with the restriction mapping $f|_\tau$ being affine, for every $\tau\in\cF(\cM)$.
We denote 
$$
\f_\ell \coloneqq f(v_\ell) = \left( f_\ell^1, f_\ell^2, f_\ell^3 \right)\tr, ~ \textrm{ for every $v_\ell\in\mathcal{V}(\cM)$}.
$$
The mapping $f$ can be represented as a matrix
\begin{equation}\label{eq:representative_matrix}
    \f 
= \begin{bmatrix}
\f_1\tr \\
\vdots \\
\f_n\tr
\end{bmatrix}
= \begin{bmatrix}
f_1^1 & f_1^2 & f_1^3 \\
\vdots & \vdots & \vdots \\
f_n^1 & f_n^2 & f_n^3
\end{bmatrix}
=: \begin{bmatrix}
\f^1 & \f^2 & \f^3
\end{bmatrix},
\end{equation}
or a vector
$$
\vecop(\f) = \begin{bmatrix}
\f^1 \\
\f^2 \\
\f^3
\end{bmatrix}.
$$

\subsection{The objective function}\label{sec:objective_function}

The authalic energy for simplicial mappings $f\colon \cM\to\R^{3}$ is defined as~\protect{\cite{Yueh:2023}}
\[
   E_A(f) = E_S(f) - \cA(f),
\]
where $E_{S}$ is the stretch energy defined as
$$
   E_{S}(f) = \frac{1}{2} \, \vecop(\f)\tr (I_3\otimes L_S(f)) \vecop(\f),
$$
with $\otimes$ being the Kronecker product.
The weighted Laplacian matrix $ L_S(f) $ is given by
\begin{equation} \label{eq:L_S}
   [L_S(f)]_{i,j} =
\begin{cases}
-\sum_{[v_i,v_j,v_k]\in\cF(\cM)} [\omega_S(f)]_{i,j,k}  &\mbox{if $[{v}_i,{v}_j]\in\cE(\cM)$,}\\
-\sum_{\ell\neq i} [L_S(f)]_{i,\ell} &\mbox{if $j = i$,}\\
0 &\mbox{otherwise,}
\end{cases}
\end{equation}
in which $\omega_S(f)$ is the modified cotangent weight defined as
\begin{equation} \label{eq:omega}
   [\omega_S(f)]_{i,j,k} = \frac{\cot(\theta_{i,j}^k(f))  \, |f([v_i,v_j,v_k])|}{2|[v_i,v_j,v_k]|},
\end{equation}
with $\theta_{i,j}^k(f)$ being the angle opposite to the edge $f([v_i,v_j])$ at the point $f(v_k)$ on the image $f(\cM)$, as illustrated in Figure \ref{fig:cot}.

\begin{figure}[htbp]
\centering
\begin{tikzpicture}[thick,scale=1.2]
\coordinate (v_i) at (0,0);
\coordinate (v_j) at (0,2);
\coordinate (v_k) at (2,1);
\coordinate (v_l) at (-2,1);
\filldraw[green!20] (v_i) -- (v_j) -- (v_k);
\filldraw[green!20] (v_i) -- (v_j) -- (v_l);
\pic[draw, ->, "$\theta_{i,j}^k(f)$", angle eccentricity=2.05, angle radius=0.6cm]{angle = v_i--v_l--v_j};
\pic[draw, ->, "$\theta_{j,i}^\ell(f)$", angle eccentricity=2.05, angle radius=0.6cm]{angle = v_j--v_k--v_i};
\draw{
(v_i) -- (v_j) -- (v_k) -- (v_i) -- (v_l) -- (v_j)
};
\tikzstyle{every node}=[circle, draw, fill=yellow!20, inner sep=1pt, minimum width=2pt]
\draw{
(0,0) node{$\f_i$}
(0,2) node{$\f_j$}
(2,1) node{$\f_\ell$}
(-2,1) node{$\f_k$}
};
\end{tikzpicture}
\caption{An illustration of the cotangent weight defined on the image of $f$.}
\label{fig:cot}
\end{figure}
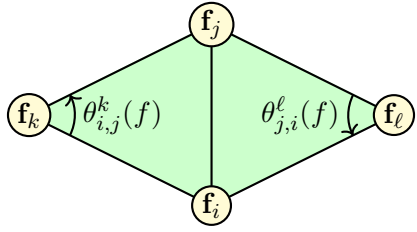

It is proved that $E_A(f)\geq 0$ and the equality holds if and only if $f$ preserves the area~\protect{\cite[Corollary 3.4]{Yueh:2023}}. 
Due to the optimization process, the image area $\cA(f)$ is not constant, hence we introduce a prefactor $|\cM|/\cA(f)$ and define the \emph{normalized stretch energy} as
\begin{equation}\label{eq:objective_function}
   E(f) = \frac{|\cM|}{\cA(f)} E_S(f).    
\end{equation}
To perform numerical optimization via the RGD method, we first need to compute the Euclidean gradient. By applying the formula $\nabla_{\f^s} E_S(f) = 2 L_S(f) \, \f^s$ from \cite[(3.6)]{Yueh:2023}, the gradient of $E(f)$ can be formulated as
\begin{align}\label{eq:egrad_obj_fun}
\nabla_{\f^s} E(f) &= \nabla_{\f^s} \left(\frac{|\cM|}{\cA(f)} E_S(f) \right) \nonumber\\
&= \frac{|\cM|}{\cA(f)} \nabla_{\f^s} E_S(f) + E_S(f) \nabla_{\f^s} \frac{|\cM|}{\cA(f)} \nonumber\\
&= \frac{2|\cM|}{\cA(f)} L_S(f) \,\f^s - \frac{|\cM| E_S(f)}{\cA(f)^2} \nabla_{\f^s} \cA(f) \nonumber\\
&= \frac{2|\cM|}{\cA(f)} L_S(f) \,\f^s - \left( \frac{|\cM| E_S(f)}{\cA(f)^2} \right) \nabla_{\f^s} \cA(f).
\end{align}
The following proposition gives an explicit formula for the calculation of $\nabla_{\f^s} \cA(f)$.
\begin{proposition}[Formula for $\nabla\cA$]
The gradient of $\cA$ can be explicitly formulated as
\begin{equation} \label{eq:GradA}
\nabla\cA(f|_\tau) = \frac{|\tau|}{\cA(f|_\tau)} \, \vecop(L_S(f|_\tau) \,\f_\tau).
\end{equation} 
\end{proposition}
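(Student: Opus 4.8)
Since $f|_\tau$ is affine, $\cA(f|_\tau)$ is simply the area of the Euclidean triangle in $\R^{3}$ with vertices $\f_i,\f_j,\f_k$ and so depends only on the block $\f_\tau$. The plan is to prove \eqref{eq:GradA} one vertex at a time: it is equivalent to the three identities
\[
  \nabla_{\f_a}\cA(f|_\tau)=\frac{|\tau|}{\cA(f|_\tau)}\Bigl(a\text{-th row of } L_S(f|_\tau)\,\f_\tau\Bigr)\tr,\qquad a\in\{i,j,k\},
\]
assembled so that the column-stacking $\vecop$ agrees on both sides. So I would (i) compute each $\nabla_{\f_a}\cA(f|_\tau)\in\R^{3}$ by elementary geometry, and (ii) match the answer with the corresponding row of $L_S(f|_\tau)\,\f_\tau$ using the definitions \eqref{eq:L_S}--\eqref{eq:omega}.

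For (i), fix $a=i$ and freeze $\f_j,\f_k$. Writing $p=\|\f_j-\f_k\|$ and letting $h$ be the distance from $\f_i$ to the line through $\f_j$ and $\f_k$, we have $\cA(f|_\tau)=\tfrac12 p\,h$, hence $\nabla_{\f_i}\cA(f|_\tau)=\tfrac12 p\,\hat n$, where $\hat n$ is the unit vector pointing from the foot of the perpendicular toward $\f_i$ (this vector lies in the affine plane of the image triangle and is orthogonal to $[\f_j,\f_k]$; observing that $\nabla_{\f_i}\cA$ has no transverse component is the only place where the target being $\R^{3}$ rather than $\R^{2}$ enters). I would then rewrite $\hat n$ through the edge vectors: dropping the altitude from $\f_i$ splits $[\f_j,\f_k]$ into segments of lengths $h\cot\theta_{i,k}^{j}(f)$ and $h\cot\theta_{i,j}^{k}(f)$ adjacent to $\f_j$ and $\f_k$, so $p=h\bigl(\cot\theta_{i,j}^{k}(f)+\cot\theta_{i,k}^{j}(f)\bigr)$, and with $u$ the unit vector from $\f_j$ toward $\f_k$,
\[
  \f_i-\f_j=h\,\hat n+h\cot\theta_{i,k}^{j}(f)\,u,\qquad \f_i-\f_k=h\,\hat n-h\cot\theta_{i,j}^{k}(f)\,u .
\]
The combination $\cot\theta_{i,j}^{k}(f)(\f_i-\f_j)+\cot\theta_{i,k}^{j}(f)(\f_i-\f_k)$ then cancels the $u$-terms and equals $p\,\hat n=2\,\nabla_{\f_i}\cA(f|_\tau)$, giving the closed form $\nabla_{\f_i}\cA(f|_\tau)=\tfrac12\bigl(\cot\theta_{i,j}^{k}(f)(\f_i-\f_j)+\cot\theta_{i,k}^{j}(f)(\f_i-\f_k)\bigr)$, and likewise for $\f_j,\f_k$ by relabeling.

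For (ii), I would specialize \eqref{eq:L_S}--\eqref{eq:omega} to the single face $\tau=[v_i,v_j,v_k]$, noting $|f([v_i,v_j,v_k])|=\cA(f|_\tau)$ and $|[v_i,v_j,v_k]|=|\tau|$; this gives $[L_S(f|_\tau)]_{i,j}=-\tfrac{\cA(f|_\tau)}{2|\tau|}\cot\theta_{i,j}^{k}(f)$ and $[L_S(f|_\tau)]_{i,k}=-\tfrac{\cA(f|_\tau)}{2|\tau|}\cot\theta_{i,k}^{j}(f)$, with $[L_S(f|_\tau)]_{i,i}$ equal to minus their sum. Hence the $i$-th row of the product $L_S(f|_\tau)\,\f_\tau$ equals $\tfrac{\cA(f|_\tau)}{2|\tau|}\bigl(\cot\theta_{i,j}^{k}(f)(\f_i-\f_j)+\cot\theta_{i,k}^{j}(f)(\f_i-\f_k)\bigr)\tr$, which by (i) is $\tfrac{\cA(f|_\tau)}{|\tau|}\bigl(\nabla_{\f_i}\cA(f|_\tau)\bigr)\tr$; the rows for $j$ and $k$ follow by relabeling, and assembling the three column-wise is \eqref{eq:GradA}.

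I expect the only genuine obstacle to be step (i) — the altitude/cotangent bookkeeping that converts the purely metric formula $\nabla_{\f_i}\cA=\tfrac12 p\,\hat n$ into one expressed through the weighted-Laplacian entries; the rest is substitution into the definitions. If one prefers to avoid even that, a shortcut is to first establish the pointwise identity $E_S(f|_\tau)=\cA(f|_\tau)^{2}/|\tau|$, which follows because $L_S(f|_\tau)$ equals $\tfrac{\cA(f|_\tau)}{2|\tau|}$ times the cotangent Laplacian of the image triangle and because $\sum_{\text{edges}}\cot\theta\cdot(\text{edge length})^{2}=4\,(\text{area})$ for any triangle, and then to differentiate this identity and compare with $\nabla_{\f^s}E_S(f)=2L_S(f)\,\f^s$ from \cite[(3.6)]{Yueh:2023}; the chain rule then yields $\nabla_{\f^s}\cA(f|_\tau)=\tfrac{|\tau|}{\cA(f|_\tau)}L_S(f|_\tau)\,\f^s$, i.e.\ \eqref{eq:GradA} read block by block.
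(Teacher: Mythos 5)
Your proposal is correct, and its main route is genuinely different from the paper's. The paper dispatches the proposition in two lines by quoting from \cite{Yueh:2023} the identities $E_S(f|_\tau)=\cA(f|_\tau)^2/|\tau|$ and $\nabla E_S(f|_\tau)=2\,\vecop(L_S(f|_\tau)\,\f_\tau)$ and applying the chain rule, so that \eqref{eq:GradA} falls out by dividing by $2\cA(f|_\tau)/|\tau|$. You instead prove the formula from scratch: the per-vertex computation $\nabla_{\f_i}\cA(f|_\tau)=\tfrac12\bigl(\cot\theta_{i,j}^{k}(f)(\f_i-\f_j)+\cot\theta_{i,k}^{j}(f)(\f_i-\f_k)\bigr)$ via the altitude decomposition is the classical cotangent area-gradient identity and your bookkeeping checks out (including the observation that the gradient has no component transverse to the image plane, since $\cA$ depends on $\f_i$ only through its distance to the line $\f_j\f_k$); and your specialization of \eqref{eq:L_S}--\eqref{eq:omega} to a single face correctly identifies the rows of $L_S(f|_\tau)\,\f_\tau$ with $\tfrac{\cA(f|_\tau)}{|\tau|}$ times these gradients. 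What your route buys is self-containedness — it needs neither cited fact and in passing re-derives the structure of $L_S(f|_\tau)$ — at the cost of the altitude/cotangent case analysis (worth a remark that signed lengths handle obtuse angles, where the foot of the altitude falls outside $[\f_j,\f_k]$ and the corresponding cotangent is negative). Your closing ``shortcut'' is essentially the paper's proof verbatim, and it even supplies a proof of $E_S(f|_\tau)=\cA(f|_\tau)^2/|\tau|$ via $\sum_{\text{edges}}\cot\theta\cdot(\text{edge length})^2=4\,(\text{area})$, which the paper only cites.
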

\begin{proof}
By applying explicit formulae $E_S(f|_\tau) = \frac{\cA(f|_\tau)^2}{|\tau|}$ and $\nabla E_S(f|_\tau) = 2 \,\vecop(L_S(f|_\tau) \,\f_\tau)$ in \cite{Yueh:2023}, the chain rule yields
$$
2 \,\vecop(L_S(f|_\tau) \,\f_\tau) = \nabla E_S(f|_\tau) = \frac{2\cA(f|_\tau)}{|\tau|} \nabla \cA(f|_\tau),
$$
which is equivalent to \eqref{eq:GradA}.
\end{proof}
Other details about the calculation of $\cA(f)$ are reported in appendix~\ref{app:calculation_area}.

\section{Riemannian optimization framework} \label{sec:riem_optim}

The \textit{Riemannian optimization framework} \protect{\cite{EAS:1998,AMS:2008,Boumal:2023}} solves constrained optimization problems where the constraints have a geometric nature. This approach utilizes the underlying geometric structure of the problems, which allows the constraints to be taken explicitly into account. The optimization variables are constrained to a smooth manifold, and the optimization is performed on that manifold. In particular, in this paper, the problem is formulated on a power manifold of $n$ unit spheres embedded in $\R^{3}$, and we use the RGD method for minimizing the cost function~\eqref{eq:objective_function} on this power manifold.
Typically, the manifolds considered are matrix manifolds. This name is due to the fact that there is a natural representation of their elements in matrix form. Traditional methods for nonlinear matrix manifold optimization rely on the Euclidean vector space structure. 
For instance, in Euclidean space $\R^{n}$, the steepest descent method updates a current iterate $\x_{k}$ by moving in the direction $\dk$ of the anti-gradient, by a step size $\alpha_{k}$ chosen according to an appropriate line-search rule.
However, on a nonlinear manifold, the vector addition $ \x_{k} + \alpha_{k} \dk $ does not make sense in general due to the manifold curvature. Hence, we need the notion of tangent vectors and their length in order to generalize the steepest descent direction to a Riemannian manifold.
Riemannian optimization allows for a flexible and theoretically sound framework, using the language of differential geometry to formulate algorithms and perform the convergence analysis, while numerical linear algebra techniques are used for implementation. Concepts such as tangent spaces, projectors, and retraction mappings are key components of this framework.
Similarly to the line-search method in Euclidean space, a line-search method in the Riemannian framework determines at a current iterate $\x_{k} $ on a manifold $M$ a search direction $\boldxi$ on the tangent space $\TxM$. The next iterate $\x_{k+1}$ is then determined by a line search along a curve $\alpha \mapsto \Retraction_{\x}(\alpha \boldxi)$ where $\Retraction_{x} \colon \TxM \to M$ is called the \emph{retraction}.
The procedure is then repeated for $\x_{k+1}$ taking the role of $\x_{k}$. Search directions can be the negative of the Riemannian gradient $\grad f(\x)$ leading to the Riemannian steepest descent method. Other search directions lead to other methods, e.g., Riemannian versions of the trust-region method \protect{\cite{ABG:2007}} or BFGS \protect{\cite{RingWirth:2012}}.

The retraction is a vital tool in this framework. It is a mapping from the tangent space to the manifold, used to transform objective functions defined in Euclidean space into functions defined on a manifold while explicitly taking the constraints into account.

In the next section, we introduce some fundamental geometry concepts used in Riemannian optimization, which are necessary to formulate the RGD method for our problems.

\section{Geometry}\label{sec:geometry}

This section focuses on the tools from Riemannian geometry needed to generalize the classical steepest descent method to matrix manifolds.
We briefly recall the geometry of the unit sphere $\Stwo$ embedded in $\R^{3}$, and then we switch to the power manifold $\Stwon$.

\subsection{Unit sphere}\label{sec:unit_sphere_geometry}

The unit sphere $ \Stwo $ is a Riemannian submanifold of $ \R^{3} $ defined as
\[
   \Stwo = \lbrace \x \in \R^{3} \colon \x\tr \x = 1 \rbrace.
\]
The Riemannian metric (inner product) on the sphere is inherited from the embedding space $ \R^{3} $, i.e.,
\[
   \langle \boldxi, \boldsymbol{\eta} \rangle_{\x} = \boldxi\tr \boldsymbol{\eta}, \quad \boldxi, \, \boldsymbol{\eta} \in \TxS,
\]
where $\TxS$ is the tangent space to $ \Stwo $ at $ \x \in \Stwo $, defined as the set of all vectors orthogonal to $ \x $ in $ \R^{3} $, i.e.,
\[
   \TxS = \lbrace \z \in \R^{3} \colon \x\tr \z = 0 \rbrace.
\]
The projector $\P_{\TxS} \colon \R^{3} \to \TxS $ onto the tangent space $\TxS$ is defined by
\begin{equation}\label{eq:proj_on_TxS}
    \P_{\TxS}(\z) = (I_{3}-\x\x\tr)\,\z.
\end{equation}
In the following, points on the unit sphere are denoted by $\f_{i}$, and tangent vectors are represented by $\boldxi_{i}$.

\subsection{The power manifold \texorpdfstring{$\Stwon$}{TEXT}{}}\label{sec:power_manifold}

We aim to minimize a function $E(\f_{1},\dots,\f_{n})$ \eqref{eq:objective_function}, where each $\f_{i}$, $i=1,\dots,n$, lives on the same manifold $\Stwo$. This leads us to consider the \emph{power manifold} of $n$ unit spheres
\[
   \Stwon = \underbrace{\Stwo \times \Stwo \times \cdots \Stwo}_{n \ \text{times}},
\]
with the metric of $\Stwo$ extended elementwise. In the remaining part of this section, we present the tools from Riemannian geometry needed to generalize gradient descent to this manifold. The projector onto the tangent space is used to compute the Riemannian gradient. The retraction is used to turn an objective function defined on $\R^{n \times 3}$ into an objective function defined on the manifold $\Stwon$.

\subsubsection{Projection onto the tangent space}

As seen above, the projector onto the tangent space to the unit sphere at the point $ \x $ is given by~\eqref{eq:proj_on_TxS}.
Here, the points are denoted by $\f_{i} \in \R^{3} $, $i=1,\dots,n$, so we write
\[
   \P_{\mathrm{T}_{\f_{i}}\Stwo} = I_{3} - \f_{i} \f_{i}\tr.
\]
It clearly changes for every $ \f_{i} $. The projector onto the tangent space to the power manifold $\Stwon$ is a mapping
\[
   \PT \colon \R^{n \times 3} \to \mathrm{T}_{\f}\Stwon,
\]
and can be represented by a block diagonal matrix of size $3n \times 3n$, i.e.,
\begin{equation}\label{eq:projector_power_manifold}
   \PT \coloneqq \blkdiag\!\left(\P_{\mathrm{T}_{\f_{1}}\Stwo}, \P_{\mathrm{T}_{\f_{2}}\Stwo},\dots, \P_{\mathrm{T}_{\f_{n}}\Stwo} \right) = \begin{bmatrix}
       \P_{\mathrm{T}_{\f_{1}}\Stwo}  &  &  &  \\
         &  \P_{\mathrm{T}_{\f_{2}}\Stwo}  &  &  \\
         &  & \ddots & \\
         &  &  &  \P_{\mathrm{T}_{\f_{n}}\Stwo}
   \end{bmatrix}.
\end{equation}
When writing code, we never actually create this matrix. Instead, we implement an efficient version using vectorized operations (\verb|bsxfun|).

\subsubsection{Projection onto \texorpdfstring{$\Stwon$}{TEXT}{}}

The projection of a single point $\f_{i}$ from $ \R^{3} $ to the unit sphere $ \Stwo $ is given by the normalization as
\[
   \widetilde{\f}_{i} = \dfrac{\f_{i}}{\|\f_{i}\|_{2}}.
\]
The projection of the whole of $\f$ onto the power manifold $\Stwon$ is given by
\[
   \P_{\Stwon} \colon \R^{n \times 3} \to \Stwon,
\]
defined by
\[
   \f \mapsto \widetilde{\f} \coloneqq
   \diag\!\left( \dfrac{1}{\|\f_{1}\|_{2}}, \dfrac{1}{\|\f_{2}\|_{2}}, \ldots, \dfrac{1}{\|\f_{n}\|_{2}} \right) \begin{bmatrix} \f_{1} & \f_{2} & \cdots & \f_{n} \end{bmatrix} \tr.
\]
Again, this representative matrix is only shown for illustrative purposes; in the actual implementation, we use row-wise normalization of $\f$.

\subsubsection{Retraction onto \texorpdfstring{$\Stwon$}{TEXT}{}}\label{sec:retraction_on_S2n}

A retraction is a mapping from the tangent space to the manifold used to turn tangent vectors into points on the manifold and functions defined on the manifold into functions defined on the tangent space; see \protect{\cite{AMS:2008,Absil:2012}} for more details.

The retraction of a single tangent vector $ \boldxi_{i} $ from $ \mathrm{T}_{\f_{i}} \Stwo $ to $ \Stwo $ is a mapping $ \Retraction_{\f_{i}} \colon \mathrm{T}_{\f_{i}} \Stwo \to \Stwo $, defined by~\protect{\cite[Example 4.1.1]{AMS:2008}}
\[
   \Retraction_{\f_{i}}(\boldxi_{i}) = \dfrac{\f_{i} + \boldxi_{i}}{\|\f_{i} + \boldxi_{i} \|}.
\]
Figure~\ref{fig:sphere_retraction} provides an illustration of the retraction from $ \mathrm{T}_{\f_{i}} \Stwo $ to $ \Stwo $. 

\begin{figure}[htbp]
  \centering
  \includegraphics[width=0.40\textwidth]{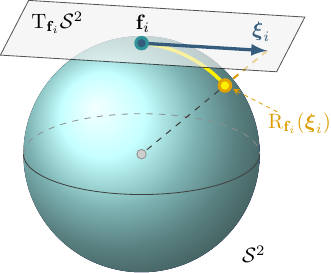}
  \caption{An illustration of the retraction mapping on the unit sphere $ \Stwo $.}\label{fig:sphere_retraction}
\end{figure}

For the power manifold $\Stwon$, the retraction of all the tangent vectors $ \boldxi_{i} $, $ i = 1, \dots, n $, is a mapping
\[
   \Retraction_{\f} \colon \mathrm{T}_{\f}\Stwon \to \Stwon,
\]
defined by
\begin{equation}\label{eq:retraction_Stwon}
   \begin{bmatrix} \boldxi_{1} & \cdots & \boldxi_{n} \end{bmatrix} \tr \mapsto \diag\!\left( \dfrac{1}{\|\f_{1} + \boldxi_{1}\|_{2}}, \ldots, \dfrac{1}{\|\f_{n} + \boldxi_{n}\|_{2}} \right) \begin{bmatrix} \f_{1}+\boldxi_{1} & \cdots & \f_{n}+\boldxi_{n} \end{bmatrix} \tr.
\end{equation}
Again, this retraction is implemented by row-wise normalization $\f + \boldxi$.

\section{Riemannian gradient descent method}\label{sec:Riem_grad_descent}

We are now in the position of introducing the RGD method. In this section, we will first provide the formula for the Riemannian gradient on the power manifold $\Stwon$. Then, we will explain the RGD method by providing its pseudocode, and finally, we will recall the known theoretical results that ensure the convergence of RGD.

The Riemannian gradient of the objective function $E$ in \eqref{eq:objective_function} is given by the projection onto $ \mathrm{T}_{\f} \Stwon $ of the Euclidean gradient
\begin{equation}\label{eq:rgrad_obj_fun}
    \grad E(f) = \PT(\nabla E(f)),
\end{equation}
where $\nabla E$ is explicitly formulated in \eqref{eq:egrad_obj_fun}.
This is always the case for embedded submanifolds.
Figure~\ref{fig:Riemannian_gradient} illustrates the difference between the Euclidean and the Riemannian gradient for one point on the unit sphere.

\begin{figure}[htbp]
\centering
\includegraphics[width=0.40\columnwidth]{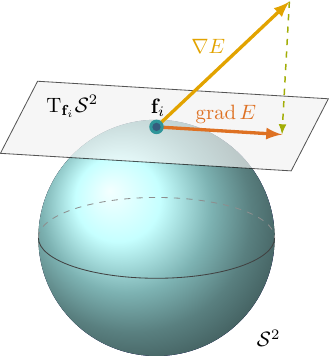}
\caption{Illustration of the difference between Euclidean and Riemannian gradient.}
\label{fig:Riemannian_gradient}
\end{figure}

Given an initial iterate $ f^{(0)} \in \Stwon $, the line-search algorithm generates a sequence of iterates $ \lbrace f^{(k)} \rbrace $ as follows.
At each iteration $ k = 0,1,2,\ldots $, it chooses a search direction $\boldd^{(k)} = -\grad E(f^{(k)})$ in the tangent space $\mathrm{T}_{\f^{(k)}}\Stwon$ such that the sequence $ \lbrace \boldd^{(k)} \rbrace $ is gradient related. Then the new point $ f^{(k+1)} $ is chosen such that
\begin{equation}\label{eq:condition_412}
   E(f^{(k)}) - E(f^{(k+1)}) \geq c \left( E(f^{(k)}) - E \! \left(\Retraction_{\f^{(k)}} (\alpha_{k}\boldd^{(k)})\right) \right),
\end{equation}
where $ \alpha_{k} $ is the step size for the given $ \boldd^{(k)} $.

The RGD method on the power manifold $\Stwon$ is summarized in Algorithm~\ref{algo:RGD}. It has been adapted from~\protect{\cite[p.~63]{AMS:2008}}. In practice, in our numerical experiments of section~\ref{sec:numerical_experiments}, the initial mapping $\f^{(0)}\in \Stwon$ is computed by applying a few steps of the fixed-point iteration (FPI) method; see section~\ref{sec:fixed_point_method}, Algorithm~\ref{algo:FPI_SEM}. The line-search procedure used in line 7 is described in detail in appendix~\ref{sec:line_search}.

\begin{algorithm}
\SetAlgoLined
 Given objective function $E$, power manifold $\Stwon$, initial iterate $\f^{(0)}\in \Stwon$, retraction $\Retraction_{\f}$ from $\mathrm{T}_{\f}\Stwon$ to $\Stwon$\;
 \KwResult{Sequence of iterates $\lbrace f^{(k)} \rbrace$.}
 $k \leftarrow 0$\;
 \While{$f^{(k)}$ sufficiently minimizes $E$}{
     Compute the Euclidean gradient of the objective function $\nabla E(f^{(k)})$ \eqref{eq:egrad_obj_fun}\;
     Compute the Riemannian gradient as $ \grad E(f^{(k)}) = \P_{\mathrm{T}_{\f^{(k)}}\Stwon} \! \big( \nabla E(f^{(k)}) \big)$\;
     Choose the anti-gradient direction $ \boldd^{(k)} = -\grad E(f^{(k)}) $\;
     Use a line-search procedure to compute a step size $\alpha_{k} > 0$ that satisfies the sufficient decrease condition; see appendix~\ref{sec:line_search}\;
     Set $\f^{(k+1)} = \Retraction_{\f^{(k)}}(\alpha_{k} \boldd^{(k)})$\;
     $ k \leftarrow k + 1 $\;
 }
 \caption{RGD on $\Stwon$.}\label{algo:RGD}
\end{algorithm}

\subsection{Known convergence results}\label{sec:convergence_results}

The RGD method has theoretically guaranteed convergence. For the reader's convenience, we report the two main results on the convergence of RGD. The first result is about the convergence of Algorithm~\ref{algo:RGD} to critical points of the objective function. The second result is about the convergence of RGD to a local minimizer with a line-search technique.

\begin{theorem}[\protect{\cite[Theorem~4.3.1]{AMS:2008}}]\label{thm:convg_of_LS}
Let $ \lbrace f^{(k)} \rbrace $ be an infinite sequence of iterates generated by Algorithm~\ref{algo:RGD}. Then, every accumulation point of $ \lbrace f_{k} \rbrace $ is a critical point of the cost function $ E $.
\end{theorem}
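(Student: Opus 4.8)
The plan is to reproduce, in the setting at hand, the classical line-search argument of \cite[Theorem~4.3.1]{AMS:2008}, arguing by contradiction. Suppose $\{f^{(k)}\}$ had an accumulation point $f_\star\in\Stwon$ with $\grad E(f_\star)\neq 0$, and fix an index set $\mathcal{K}$ with $f^{(k)}\to f_\star$ as $k\to\infty$ in $\mathcal{K}$. First I would record that successive cost differences vanish. Since $\boldd^{(k)}=-\grad E(f^{(k)})$ and the line-search procedure of appendix~\ref{sec:line_search} enforces an Armijo-type sufficient-decrease condition, the real sequence $\{E(f^{(k)})\}$ is nonincreasing; because $\Stwon$ is compact, $E$ is bounded below on it, so $\{E(f^{(k)})\}$ converges, and continuity of $E$ forces its limit to equal $E(f_\star)$. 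Hence $E(f^{(k)})-E(f^{(k+1)})\to 0$.

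Next I would deduce that the step sizes collapse along $\mathcal{K}$. The sufficient-decrease condition gives
\[
   E(f^{(k)})-E(f^{(k+1)}) \;\geq\; -\,c\,\alpha_k\,\langle \grad E(f^{(k)}), \boldd^{(k)}\rangle \;=\; c\,\alpha_k\,\|\grad E(f^{(k)})\|^2 ,
\]
and since $\grad E$ is continuous, $\|\grad E(f^{(k)})\|^2\to\|\grad E(f_\star)\|^2>0$ on $\mathcal{K}$; combined with the left-hand side tending to $0$, this yields $\alpha_k\to 0$ as $k\to\infty$ in $\mathcal{K}$. (In the terminology of \cite{AMS:2008}, the directions $\boldd^{(k)}$ are automatically gradient related here, since $\langle \grad E(f^{(k)}),\boldd^{(k)}\rangle=-\|\grad E(f^{(k)})\|^2$.)

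The core of the argument is to extract a contradiction from $\alpha_k\to0$. Because the accepted step is eventually strictly smaller than the initial trial step, for large $k\in\mathcal{K}$ the line search must have rejected a trial step $\widetilde\alpha_k$ with $\widetilde\alpha_k\to0$, i.e.
\[
   E\!\left(\Retraction_{\f^{(k)}}(\widetilde\alpha_k\boldd^{(k)})\right)-E(f^{(k)}) \;>\; c\,\widetilde\alpha_k\,\langle \grad E(f^{(k)}), \boldd^{(k)}\rangle .
\]
Setting $g_k(t):=E\!\left(\Retraction_{\f^{(k)}}(t\,\boldd^{(k)})\right)$, dividing by $\widetilde\alpha_k>0$ and applying the mean value theorem to the smooth scalar function $g_k$ on $[0,\widetilde\alpha_k]$, I would obtain $t_k\in(0,\widetilde\alpha_k)$ with $g_k'(t_k) > c\,\langle \grad E(f^{(k)}), \boldd^{(k)}\rangle$. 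Now let $k\to\infty$ in $\mathcal{K}$: then $t_k\to0$, $f^{(k)}\to f_\star$, $\boldd^{(k)}\to-\grad E(f_\star)$ by continuity of $\grad E$, and because $\Retraction$ is a retraction, $\mathrm{D}\Retraction_{\f}(0)=\mathrm{id}$; hence $\Retraction_{\f^{(k)}}(t_k\boldd^{(k)})\to f_\star$ and $g_k'(t_k)\to\langle\grad E(f_\star),-\grad E(f_\star)\rangle=-\|\grad E(f_\star)\|^2$, while the right-hand side tends to $-c\,\|\grad E(f_\star)\|^2$. This gives $(1-c)\|\grad E(f_\star)\|^2\leq0$; since $c\in(0,1)$ in \eqref{eq:condition_412}, we conclude $\grad E(f_\star)=0$, contradicting the assumption.

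The step I expect to be the main obstacle is the passage to the limit $g_k'(t_k)\to-\|\grad E(f_\star)\|^2$: one must take the limit of a directional derivative of $E\circ\Retraction_{\f^{(k)}}$ while both the base point $\f^{(k)}$ and the tangent argument $t_k\boldd^{(k)}$ vary, which requires the pulled-back derivative to depend continuously on the base point and hence some uniform control, supplied here by the compactness of $\Stwon$ together with the smoothness of $E$ and of the retraction. Everything else is bookkeeping or a direct consequence of the constructions recalled in Sections~\ref{sec:riem_optim}--\ref{sec:Riem_grad_descent}.
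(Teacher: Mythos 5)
Your argument is exactly the classical contradiction proof of \protect{\cite[Theorem~4.3.1]{AMS:2008}} (monotone decrease, vanishing step sizes along the subsequence, mean value theorem on the pulled-back function, limit using smoothness of $E$ and the retraction plus compactness of $\Stwon$), which is precisely the proof the paper invokes by reference rather than reproducing. It is correct, up to the routine caveat that the ``rejected trial step $\widetilde\alpha_k\to0$'' step relies on the backtracking/safeguarding reducing the step by at most a bounded factor, as in the Armijo scheme assumed in \cite{AMS:2008}.
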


\begin{remark}
We are implicitly saying that a sequence can have more than one accumulation point; for example, from a sequence $ \lbrace x_{k} \rbrace $, we may extract two subsequences such that they have two distinct accumulation points.
\end{remark}

The proof of Theorem \ref{thm:convg_of_LS} can be done by contradiction, but it remains pretty technical, so we refer the interested reader to \protect{\cite[p.~65]{AMS:2008}}. It should be pointed out that Theorem \ref{thm:convg_of_LS} only guarantees the convergence to critical points. It does not tell us anything about their nature, i.e., whether the critical points are local minimizers, local maximizers, or saddle points. However, if $\lambda_{H,\min} > 0$, then $f_{\ast}$ is a local minimizer of $E$. \protect{\cite[\S 4.5.2]{AMS:2008}} gives an asymptotic convergence bound for Algorithm~\ref{algo:RGD} under this assumption (i.e., that $\lambda_{H,\min} > 0$). Indeed, the following result uses the smallest and largest eigenvalues of the Hessian of the objective function at a critical point $f_{\ast}$.

\begin{theorem}[\protect{\cite[Theorem~4.5.6]{AMS:2008}}]
    Let $ \lbrace f_{k} \rbrace $ be an infinite sequence of iterates generated by Algorithm~\ref{algo:RGD}, converging to a point $f_{\ast}$. (By Theorem~\ref{thm:convg_of_LS}, $f_{\ast}$ is a critical point of $E$.) Let $\lambda_{H,\min}$ and $\lambda_{H,\max}$ be the smallest and largest eigenvalues of the Hessian of $E$ at $f_{\ast}$. Assume that $\lambda_{H,\min} > 0$ (hence $f_{\ast}$ is a local minimizer of $E$). Then, given $r$ in the interval $(r_{\ast},1)$ with $r_{\ast} = 1-\min\left( 2\sigma \bar{\alpha}\lambda_{H,\min}, 4\sigma(1-\sigma)\beta \frac{\lambda_{H,\min}}{\lambda_{H,\max}} \right)$, there exists an integer $K \geq 0$ such that
    \begin{equation}\label{eq:418}
       E(f_{k+1}) - E(f_{\ast}) \leq (r+(1-r)(1-c))\left( E(f_{k}) - E(f_{\ast}) \right),
    \end{equation}
    for all $k\geq K$, where $c$ is the parameter in Algorithm~\ref{algo:RGD}.
    Note that $0<r_{\ast}<1$ since $\beta, \sigma \in (0,1)$.
\end{theorem}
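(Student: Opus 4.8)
The statement is a quoted theorem (\cite[Theorem~4.5.6]{AMS:2008}); here is how I would reconstruct its proof. The plan is to reduce the manifold estimate to a Euclidean one by pulling the cost function back through the retraction based at the limit point. Define $\hat{E} \coloneqq E \circ \Retraction_{f_{\ast}} \colon \mathrm{T}_{f_{\ast}}\Stwon \to \R$, a smooth function on a neighborhood of the origin of a Euclidean space. Since $f_{\ast}$ is a critical point of $E$ (Theorem~\ref{thm:convg_of_LS}) and every retraction is first-order rigid, $\hat{E}$ has vanishing gradient at $0$, and its Euclidean Hessian at $0$ equals the Riemannian Hessian of $E$ at $f_{\ast}$, whose eigenvalues lie in $[\lambda_{H,\min},\lambda_{H,\max}]$ with $\lambda_{H,\min}>0$ by hypothesis. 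By continuity of second derivatives there is a radius $\delta>0$ on whose ball the Hessian of $\hat{E}$ stays uniformly positive definite and bounded; integrating twice yields two-sided quadratic bounds $\tfrac12\lambda_{-}\|x\|^{2}\le \hat{E}(x)-\hat{E}(0)\le \tfrac12\lambda_{+}\|x\|^{2}$ and a gradient-dominance (\L ojasiewicz-type) inequality of the form $\|\grad \hat{E}(x)\|^{2}\ge \mu\,(\hat{E}(x)-\hat{E}(0))$ on that ball.

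Next I would invoke a capture argument: since $\{f_{k}\}$ converges to $f_{\ast}$, there is $K_{0}$ such that $f^{(k)}$ lies in the $\delta$-ball for all $k\ge K_{0}$, where the estimates above apply. The search direction is $\boldd^{(k)}=-\grad E(f^{(k)})$, which is gradient-related, and the Armijo-type backtracking with parameters $\sigma,\beta,\bar\alpha$ forces a uniform lower bound on the accepted step size: either the trial step $\bar\alpha$ is accepted, or backtracking by the factor $\beta$ stops at a step of order $\beta/\lambda_{+}$, because beyond that point the quadratic upper bound makes the sufficient-decrease inequality with slope $\sigma$ hold automatically. Substituting these lower bounds into the Armijo decrease inequality and using the gradient-dominance bound gives, for the line-search point, a contraction
\[
   E\!\left(\Retraction_{\f^{(k)}}(\alpha_{k}\boldd^{(k)})\right)-E(f_{\ast})\ \le\ r\,\big(E(f^{(k)})-E(f_{\ast})\big),
\]
where the two cases of the backtracking analysis produce exactly $r=1-\min\!\big(2\sigma\bar\alpha\lambda_{H,\min},\,4\sigma(1-\sigma)\beta\,\lambda_{H,\min}/\lambda_{H,\max}\big)=r_{\ast}$; any $r\in(r_{\ast},1)$ then works after enlarging $K$. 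The main obstacle is precisely the manifold bookkeeping in this step: the decrease is naturally expressed through the retraction at the moving basepoint $f^{(k)}$, not at $f_{\ast}$, so one must transfer the quadratic and gradient-dominance estimates from the chart at $f_{\ast}$ to the basepoints $f^{(k)}$ uniformly, controlling the discrepancy between $\Retraction_{f_{\ast}}$ and $\Retraction_{\f^{(k)}}$ by higher-order terms that are negligible on the $\delta$-ball — this is the technical core of \cite[\S4.5.2]{AMS:2008}.

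Finally I would feed the contraction into the acceptance rule \eqref{eq:condition_412}. Writing $a_{k}=E(f^{(k)})-E(f_{\ast})$ and $b_{k}=E(\Retraction_{\f^{(k)}}(\alpha_{k}\boldd^{(k)}))-E(f_{\ast})$, condition \eqref{eq:condition_412} reads $a_{k}-a_{k+1}\ge c\,(a_{k}-b_{k})$, hence $a_{k+1}\le(1-c)a_{k}+c\,b_{k}\le(1-c)a_{k}+c\,r\,a_{k}=\big(r+(1-r)(1-c)\big)a_{k}$, which is \eqref{eq:418}. The factor equals $1-c(1-r)\in(0,1)$ since $r\in(r_{\ast},1)$ and $c\in(0,1)$, giving linear convergence; and $0<r_{\ast}<1$ because $\sigma,\beta\in(0,1)$ and $\lambda_{H,\min}\le\lambda_{H,\max}$, completing the argument.
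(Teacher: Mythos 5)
The paper does not prove this statement at all---it is quoted verbatim from \cite[Theorem~4.5.6]{AMS:2008}---so there is no internal proof to compare against, only the source's. Your reconstruction follows essentially the same route as that source: pull back $E$ through the retraction near the limit point, use $\lambda_{H,\min}>0$ to get uniform two-sided quadratic bounds and gradient dominance, run the two-case Armijo/backtracking analysis to obtain precisely $r_{\ast} = 1-\min\bigl( 2\sigma \bar{\alpha}\lambda_{H,\min},\, 4\sigma(1-\sigma)\beta \lambda_{H,\min}/\lambda_{H,\max} \bigr)$, and then combine with the acceptance condition to get the factor $r+(1-r)(1-c)=1-c(1-r)$; the one step you gloss---uniformly transferring the quadratic and gradient-dominance estimates from the chart at $f_{\ast}$ to the pullbacks at the moving basepoints $f^{(k)}$---is indeed the technical core of \cite[\S 4.5.2]{AMS:2008}, and you correctly identify it as such.
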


As noted in \protect{\cite[p.~71]{AMS:2008}}, in typical cases of Algorithm~\ref{algo:RGD}, the constant $c$ in the descent condition equals 1, hence \eqref{eq:418} reduces to $E(f_{k+1}) - E(f_{\ast}) \leq r\left( E(f_{k}) - E(f_{\ast}) \right)$.

\section{Numerical experiments}\label{sec:numerical_experiments}

In this section, we showcase the convergence behavior of the RGD method using twelve mesh models and two line-search techniques. We present numerical results in tables and provide convergence plots. Additionally, we introduce a correction for bijectivity in section~\ref{sec:bijectivity_correction}, which helps to unfold the folding triangles.
We then compare RGD to the FPI method of~\cite{Yueh:2019} in section~\ref{sec:fixed_point_method} and the adaptive area-preserving parameterization method of~\cite{CGK:2022} in section~\ref{sec:aapp}.
Then, in section~\ref{sec:numerical_stability} we show that the algorithm is robust to noise by starting the algorithm from an initial guess with small perturbations.
Finally, in section~\ref{sec:brain_registration}, we apply our algorithm to the concrete application of surface registration of two brain models.

We conducted our experiments on a laptop Lenovo ThinkPad T460s, with Windows 10 Pro and MATLAB R2021a installed, with Intel Core i7-6600 CPU, 20GB RAM, and Mesa Intel HD Graphics 520. The benchmark triangular mesh models used in our numerical experiments are shown in Figure~\ref{fig:All_mesh_models}, arranged per increasing number of vertices and faces, from the top left to bottom right.

\begin{figure}[htbp]
\centering
\resizebox{\textwidth}{!}{
\begin{tabular}{lcccc}
\hline
Model Name  & Right Hand & David Head & Cortical Surface & Bull \\ 
\# Faces    & 8,808 & 21,338 & 30,000 & 34,504 \\ 
\# Vertices & 4,406 & 10,671 & 15,002 & 17,254 \\[0.25cm] 
&\includegraphics[height=3cm]{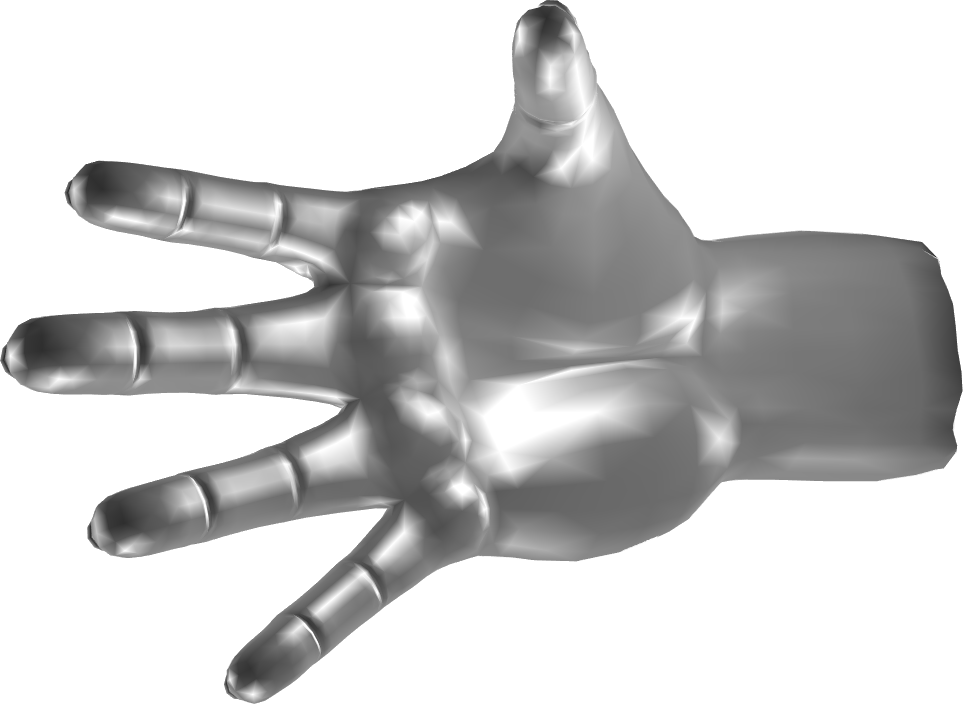} &
\includegraphics[height=3cm]{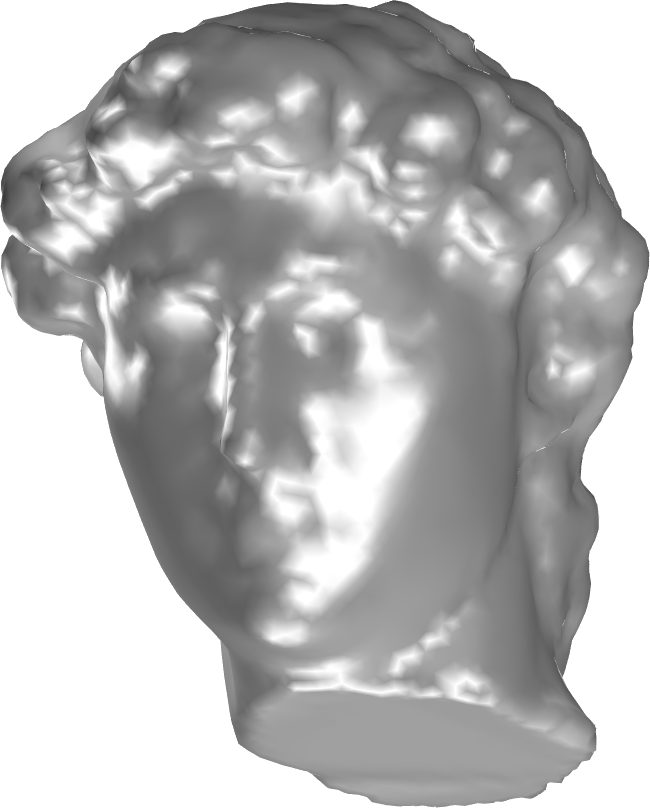} &
\includegraphics[height=3cm]{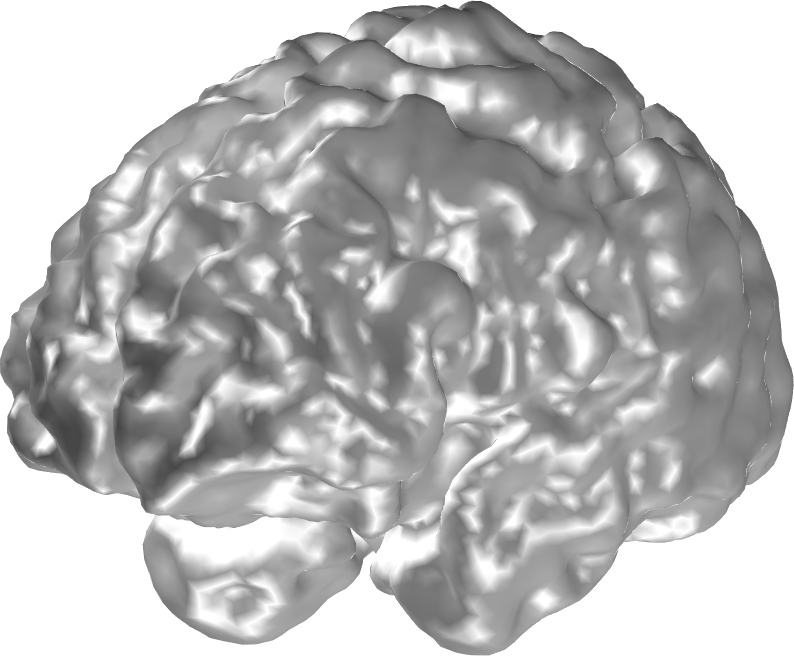} & 
\includegraphics[height=3cm]{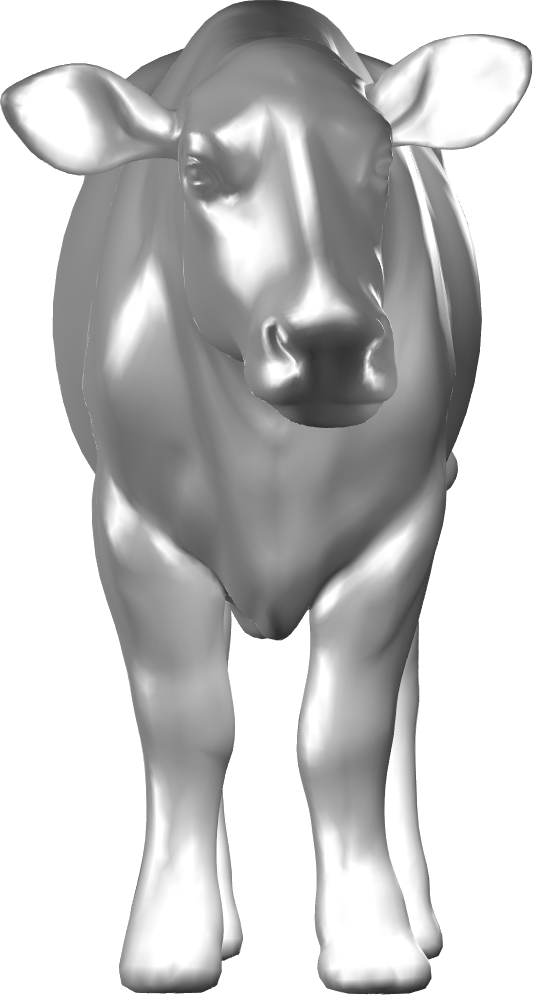} \\ \hline
\\\hline
Model Name  & Bulldog & Lion Statue & Gargoyle & Max Planck \\ 
\# Faces    & 99,590 & 100,000 & 100,000 & 102,212 \\ 
\# Vertices & 49,797 &  50,002 &  50,002 &  51,108 \\[0.25cm]  
&\includegraphics[height=3cm]{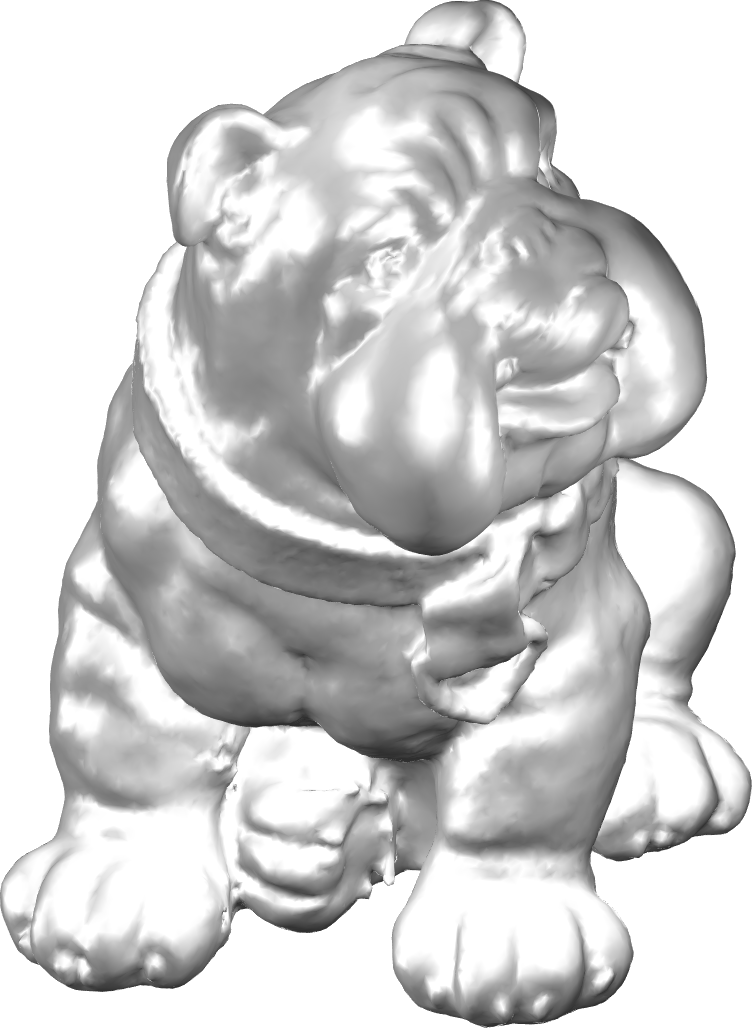} &
\includegraphics[height=3cm]{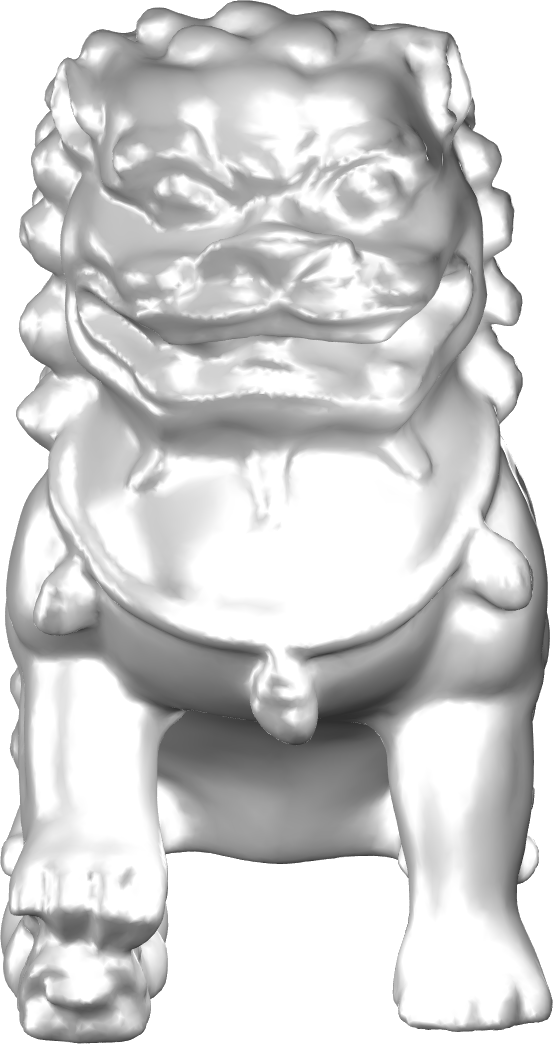} &
\includegraphics[height=3cm]{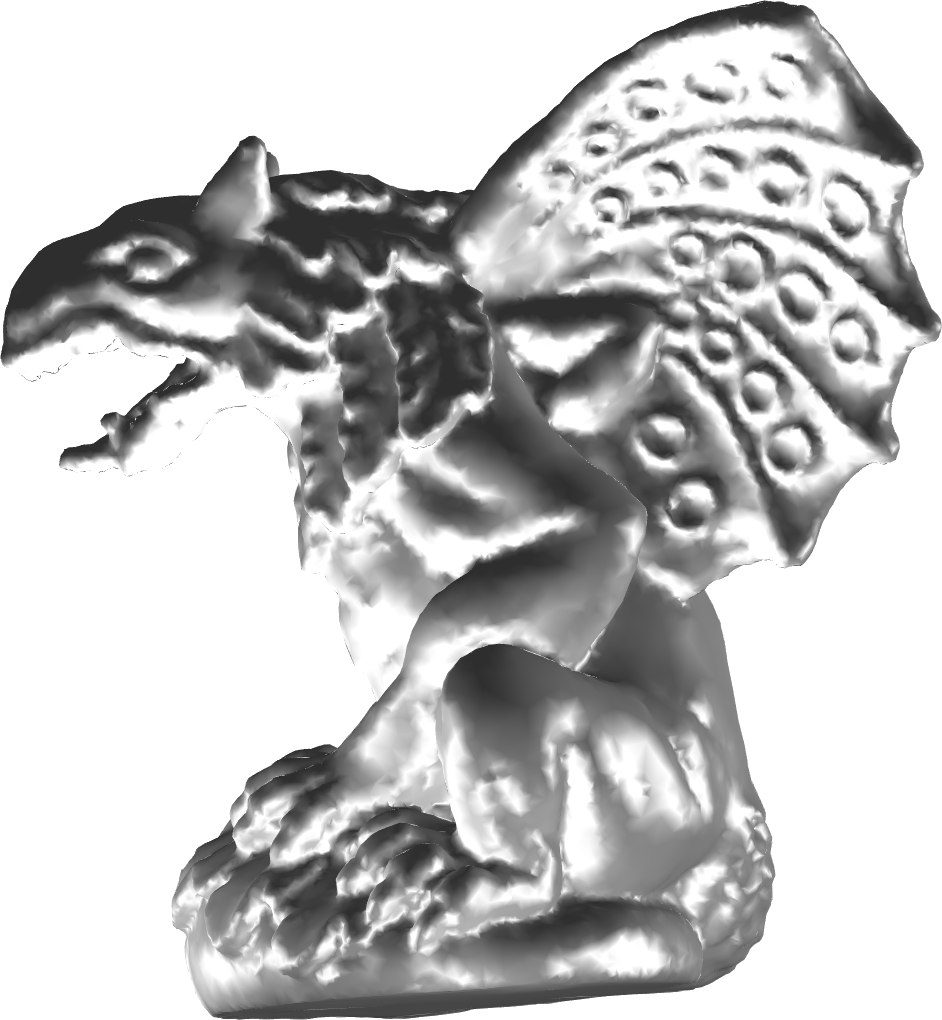} &
\includegraphics[height=3cm]{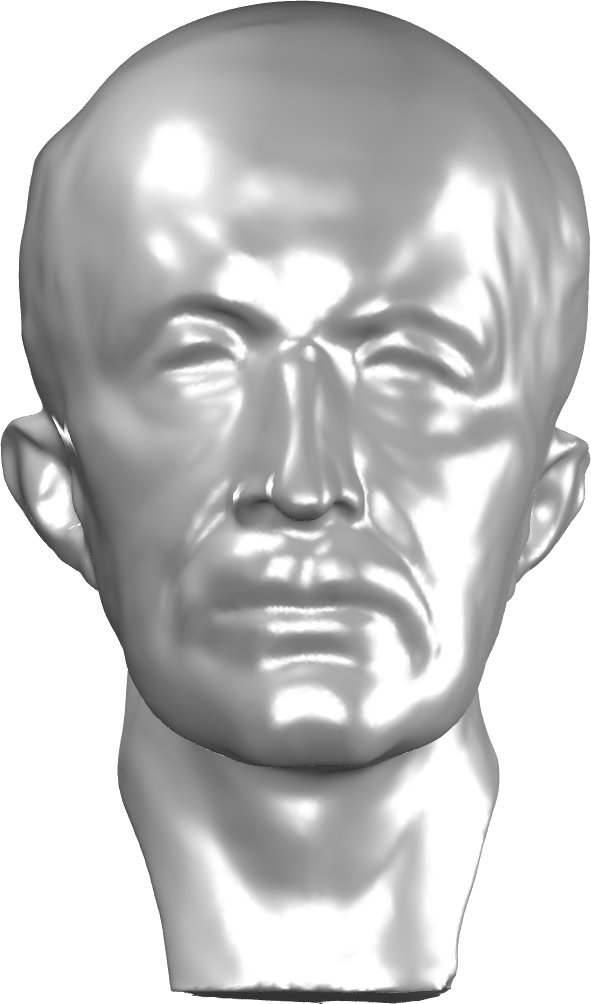} \\
\\\hline
Model Name  & Bunny & Chess King & Art Statuette & Bimba \\ 
\# Faces    & 111,364 & 263,712 & 895,274 & 1,005,146 \\ 
\# Vertices &  55,684 & 131,858 & 447,639 &   502,575 \\[0.25cm]  
&\includegraphics[height=3cm]{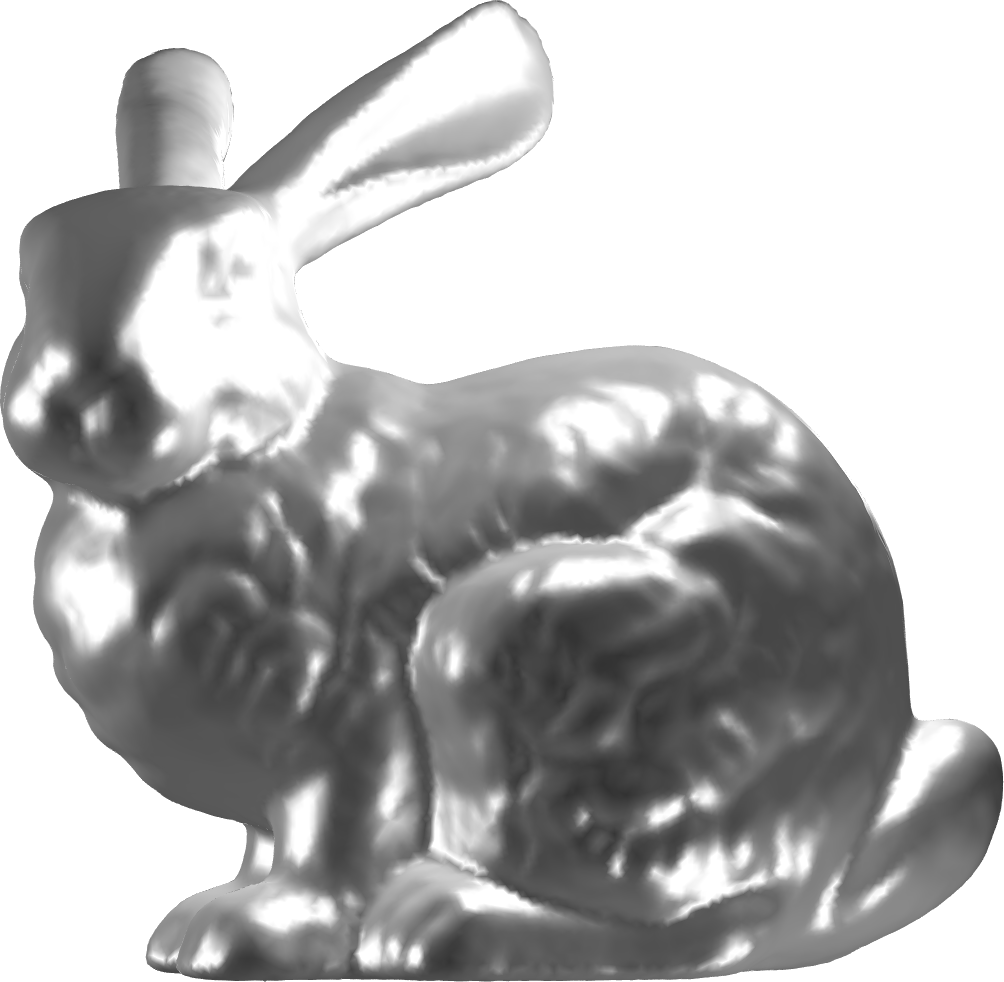} &
\includegraphics[height=3cm]{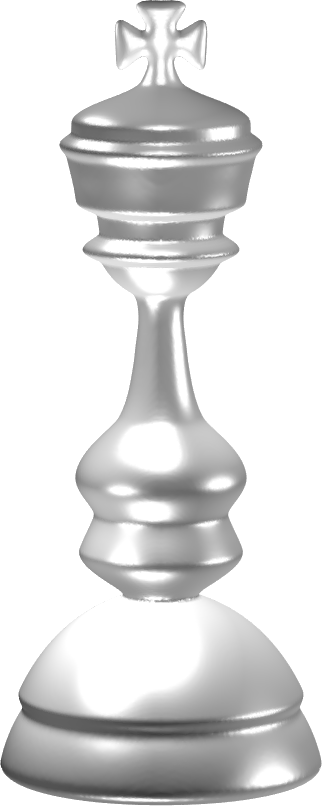} &
\includegraphics[height=3cm]{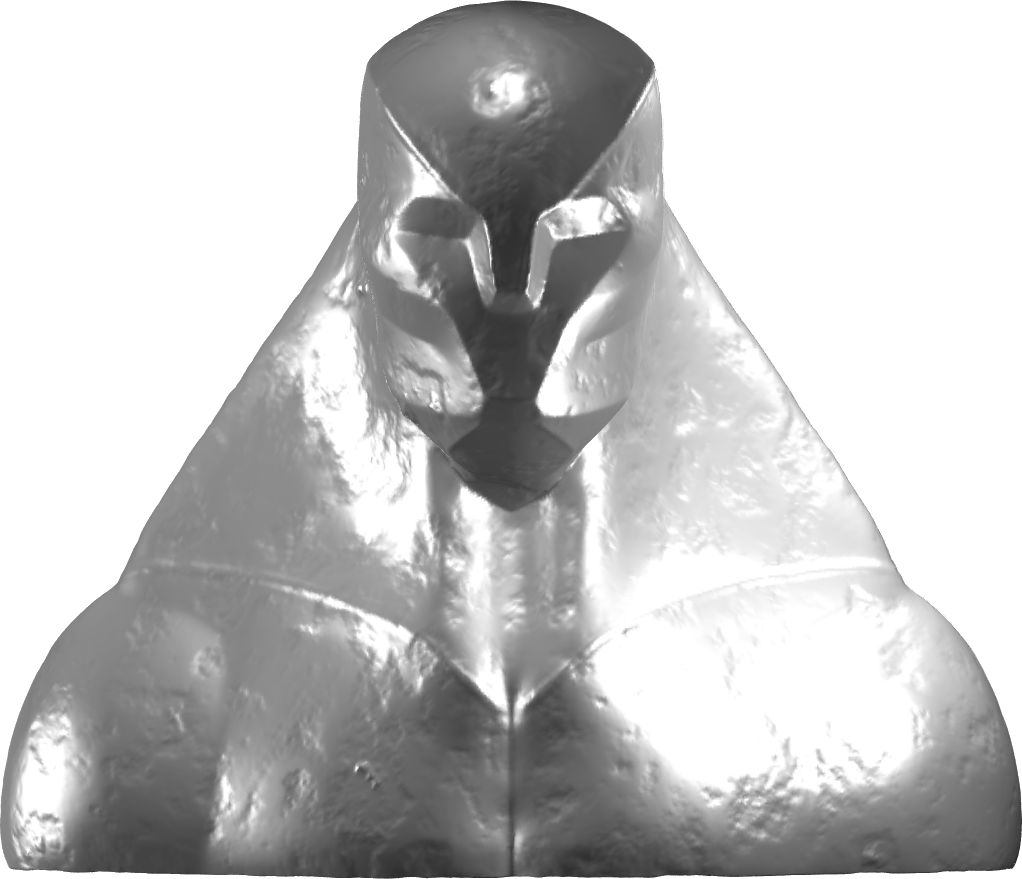} &
\includegraphics[height=3cm]{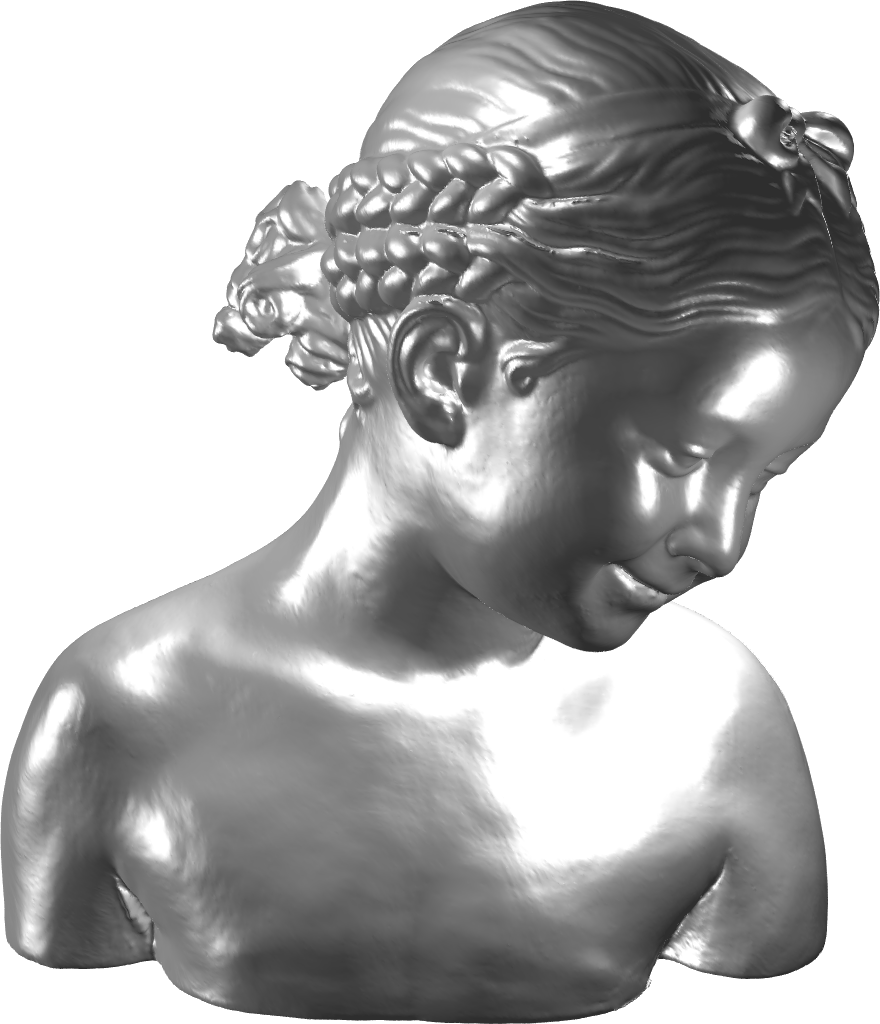} \\
\hline
\end{tabular}
}
\caption{The benchmark triangular mesh models used in this paper.}
\label{fig:All_mesh_models}
\end{figure}

\subsection{Convergence behavior of RGD}

To provide the RGD method with a good initial mapping, we first apply a few steps of the FPI method of~\cite{Yueh:2019}, described in section~\ref{sec:fixed_point_method}.
We adopted two different line-search strategies: one that uses MATLAB's \texttt{fminbnd} and another that uses the quadratic/cubic interpolant of \protect{\cite[\S 6.3.2]{DennisSchnabel:1996}}, described in appendix~\ref{sec:line_search}.

In all the experiments, we monitor the authalic energy $E_A(f) \coloneqq E_S(f) - \cA(f)$ instead of $E$ because when $E_{A} = 0$, we know from the theory that $f$ is an area-preserving mapping. Strictly speaking, in practice, we never obtain a mapping that exactly preserves the area, but we obtain a mapping that is area-distortion minimizing, since $E_{A}$ is never identically zero.
We also monitor the ratio between the standard deviation and the mean of the area ratio. This quantity has been considered in~\cite{CR:2018}. Finally, the computational time is always reported in seconds, and \#Fs denotes the number of folding triangles.

Tables~\ref{tab:RGD_ES_fminbnd} and~\ref{tab:RGD_ES_interpolant} report the numerical results for RGD for minimizing the (normalized) stretch energy $E_{S}$, when run for a maximum number of iterations (10 on the left and 100 on the right). Table~\ref{tab:RGD_ES_fminbnd} is for RGD which uses the \texttt{fminbnd} line-search strategy, while Table~\ref{tab:RGD_ES_interpolant} is for the RGD that uses the quadratic/cubic interpolant from \protect{\cite[\S 6.3.2]{DennisSchnabel:1996}}. 
Similarly to the tables, Figures~\ref{fig:All_models_NormalizedSE_LS_fminbnd_alphaMax_1_MaxIter_100} and \ref{fig:All_models_NormalizedSE_LS_interpolant_alphaMax_1_MaxIter_100} illustrate the convergence behavior of RGD in the same setting when run for 100 iterations.

\begin{table}[htbp]
   \caption{RGD for minimizing the (normalized) stretch energy $E_{S}$. Line-search strategy: \texttt{fminbnd}.}
   \label{tab:RGD_ES_fminbnd}
   \center
   \resizebox{\textwidth}{!}{
      \begin{tabular}{@{} *{9}{c}}
            \toprule
            & \multicolumn{4}{c}{$10$ Iterations} & \multicolumn{4}{c}{$100$ Iterations} \\
            \cmidrule(lr){2-5}  \cmidrule(lr){6-9}
            Model Name  &   SD/Mean  &  $E_{A}(f)$  &  Time  & \#Fs  &   SD/Mean  &  $E_{A}(f)$  &  Time  & \#Fs \\
            \cmidrule(r){1-1}  \cmidrule(lr){2-5}  \cmidrule(lr){6-9}
            {\footnotesize Right Hand}       &   0.1950   &  $ 2.17 \times 10^{-1}$  &    0.85    &   4   &   0.1459   &  $ 1.28 \times 10^{-1}$  &    9.37    &         2    \\
            {\footnotesize David Head}       &   0.0178   &  $ 4.04 \times 10^{-3}$  &    1.66    &   0   &   0.0156   &  $ 3.05 \times 10^{-3}$  &   14.95    &        0    \\
            {\footnotesize Cortical Surface} &   0.0214   &  $ 4.68 \times 10^{-3}$  &    3.62    &   0   &   0.0200   &  $ 4.15 \times 10^{-3}$  &   32.40    &        0    \\
            {\footnotesize Bull}             &   0.1492   &  $ 2.58 \times 10^{-1}$  &    4.99    &   4   &   0.1380   &  $ 2.31 \times 10^{-1}$  &   43.27    &        1    \\
            {\footnotesize Bulldog}          &   0.0369   &  $ 1.41 \times 10^{-2}$  &   13.59    &   0   &   0.0343   &  $ 1.27 \times 10^{-2}$  &  136.63    &       0    \\
            {\footnotesize Lion Statue}      &   0.1935   &  $ 4.77 \times 10^{-1}$  &   16.83    &   0   &   0.1922   &  $ 4.69 \times 10^{-1}$  &  160.20    &       0    \\
            {\footnotesize Gargoyle}         &   0.0690   &  $ 5.28 \times 10^{-2}$  &   19.55    &   0   &   0.0653   &  $ 4.84 \times 10^{-2}$  &  192.62    &       0    \\
            {\footnotesize Max Planck}       &   0.0537   &  $ 3.54 \times 10^{-2}$  &   16.52    &   0   &   0.0525   &  $ 3.38 \times 10^{-2}$  &  161.01    &       0    \\
            {\footnotesize Bunny}            &   0.0417   &  $ 2.18 \times 10^{-2}$  &   20.59    &   0   &   0.0404   &  $ 2.04 \times 10^{-2}$  &  226.99    &       0    \\
            {\footnotesize Chess King}       &   0.0687   &  $ 6.07 \times 10^{-2}$  &   52.36    &  21   &   0.0639   &  $ 5.14 \times 10^{-2}$  &  518.18    &      17    \\
            {\footnotesize Art Statuette}    &   0.0408   &  $ 2.14 \times 10^{-2}$  &   140.59   &   0   &   0.0405   &  $ 2.10 \times 10^{-2}$  &  1\,111.67   &       0    \\
            {\footnotesize Bimba Statue}     &   0.0514   &  $ 3.31 \times 10^{-2}$  &   270.63   &   1   &   0.0511   &  $ 3.29 \times 10^{-2}$  &  2\,320.19   &       1    \\
            \bottomrule
      \end{tabular}
      }
\end{table}

\begin{table}[htbp]
   \caption{RGD for minimizing the (normalized) stretch energy $E_{S}$. Line-search strategy: quadratic/cubic approximation from \protect{\cite[\S 6.3.2]{DennisSchnabel:1996}}.} 
   \label{tab:RGD_ES_interpolant}
   \center
   \resizebox{\textwidth}{!}{
      \begin{tabular}{@{} *{9}{c}}
            \toprule
            & \multicolumn{4}{c}{$10$ Iterations} & \multicolumn{4}{c}{$100$ Iterations} \\
            \cmidrule(lr){2-5}  \cmidrule(lr){6-9}
            Model Name  &   SD/Mean  &  $E_{A}(f)$  &  Time  & \#Fs  &   SD/Mean  &  $E_{A}(f)$  &  Time  & \#Fs \\
            \cmidrule(r){1-1}  \cmidrule(lr){2-5}  \cmidrule(lr){6-9}
            {\footnotesize Right Hand}    &   0.1936   &  $ 2.16 \times 10^{-1}$  &   0.36   &   4   &   0.1204   &  $ 9.40 \times 10^{-2}$  &    4.07    &       1    \\
            {\footnotesize David Head}    &   0.0178   &  $ 4.04 \times 10^{-3}$  &   0.99   &   0   &   0.0156   &  $ 3.04 \times 10^{-3}$  &    9.16    &       0    \\
            {\footnotesize Cortical Surface} &   0.0216   &  $ 4.75 \times 10^{-3}$  &   1.40   &   0   &   0.0200   &  $ 3.72 \times 10^{-3}$  &   16.01    &       0    \\
            {\footnotesize Bull}          &   0.1492   &  $ 2.59 \times 10^{-1}$  &   1.77   &   4   &   0.1348   &  $ 2.19 \times 10^{-1}$  &   18.89    &       1    \\
            {\footnotesize Bulldog}       &   0.0369   &  $ 1.41 \times 10^{-2}$  &   6.60   &   0   &   0.0343   &  $ 1.27 \times 10^{-2}$  &   61.93    &       0    \\
            {\footnotesize Lion Statue}   &   0.1935   &  $ 4.77 \times 10^{-1}$  &   7.75   &   0   &   0.1894   &  $ 4.54 \times 10^{-1}$  &   76.76    &       0    \\
            {\footnotesize Gargoyle}      &   0.0688   &  $ 5.26 \times 10^{-2}$  &   7.81   &   0   &   0.0646   &  $ 4.76 \times 10^{-2}$  &   80.52    &       0    \\
            {\footnotesize Max Planck}    &   0.0537   &  $ 3.54 \times 10^{-2}$  &   7.18   &   0   &   0.0525   &  $ 3.39 \times 10^{-2}$  &   75.60    &       0    \\
            {\footnotesize Bunny}         &   0.0417   &  $ 2.18 \times 10^{-2}$  &   8.30   &   0   &   0.0390   &  $ 1.91 \times 10^{-2}$  &   89.62   &       0    \\
            {\footnotesize Chess King}    &   0.0692   &  $ 6.07 \times 10^{-2}$  &  20.06   &  21   &   0.0647   &  $ 5.23 \times 10^{-2}$  &  207.47    &      17    \\
            {\footnotesize Art Statuette} &   0.0408   &  $ 2.14 \times 10^{-2}$  &  57.71   &   0   &   0.0405   &  $ 2.10 \times 10^{-2}$  &  654.57    &       0    \\
            {\footnotesize Bimba Statue}  &   0.0514   &  $ 3.31 \times 10^{-2}$  &  70.83   &   1   &   0.0512   &  $ 3.29 \times 10^{-2}$  &  775.36    &       1    \\
            \bottomrule            
      \end{tabular}
      }
\end{table}

\begin{figure}[htbp]
  \centering
  \includegraphics[width=\textwidth]{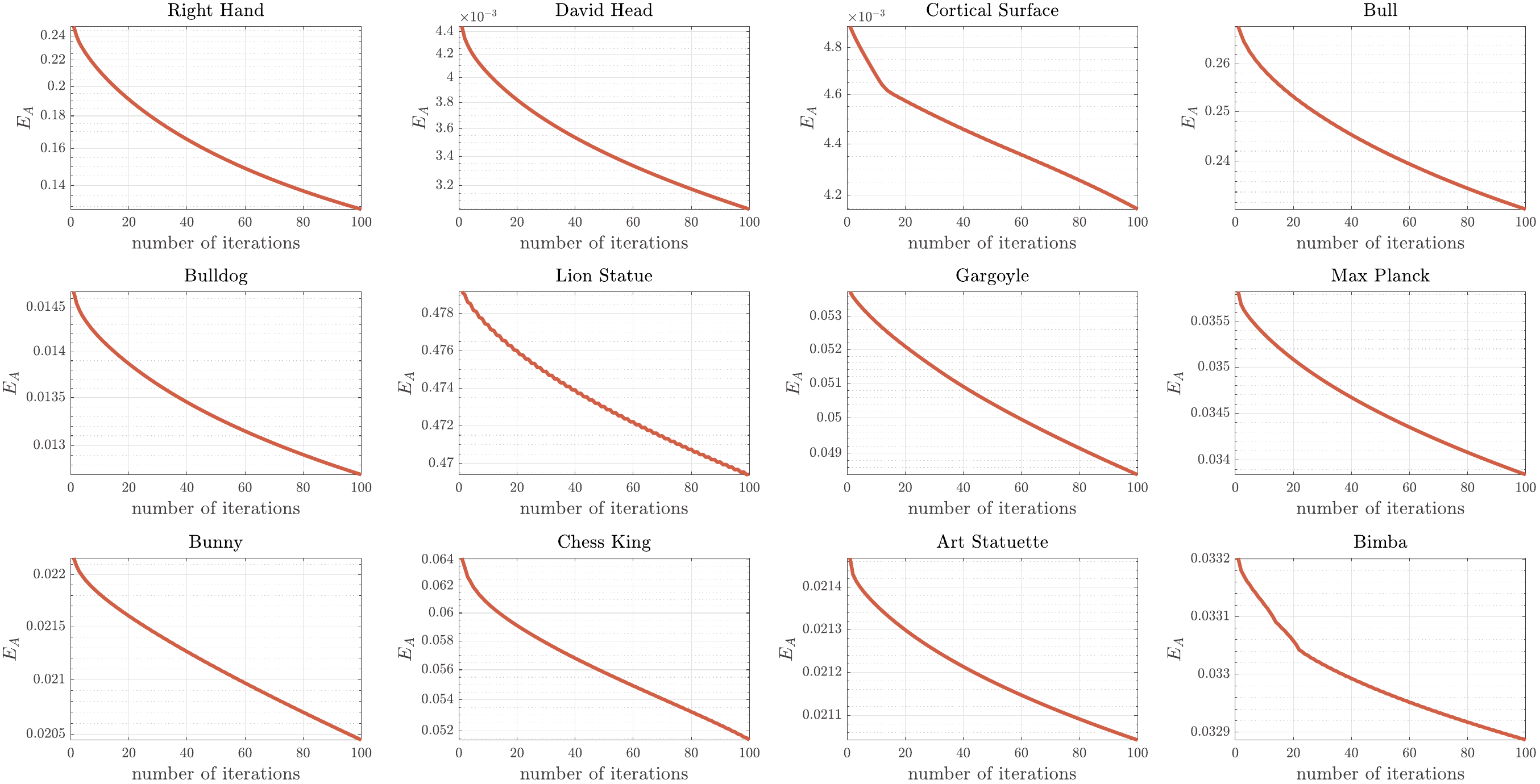}
  \caption{Convergence of the authalic energy for the benchmark mesh models, with the RGD method for minimizing the (normalized) stretch energy $E_{S}$. Line-search strategy: \texttt{fminbnd}.}
   \label{fig:All_models_NormalizedSE_LS_fminbnd_alphaMax_1_MaxIter_100}
\end{figure}

\begin{figure}[htbp]
  \centering
  \includegraphics[width=\textwidth]{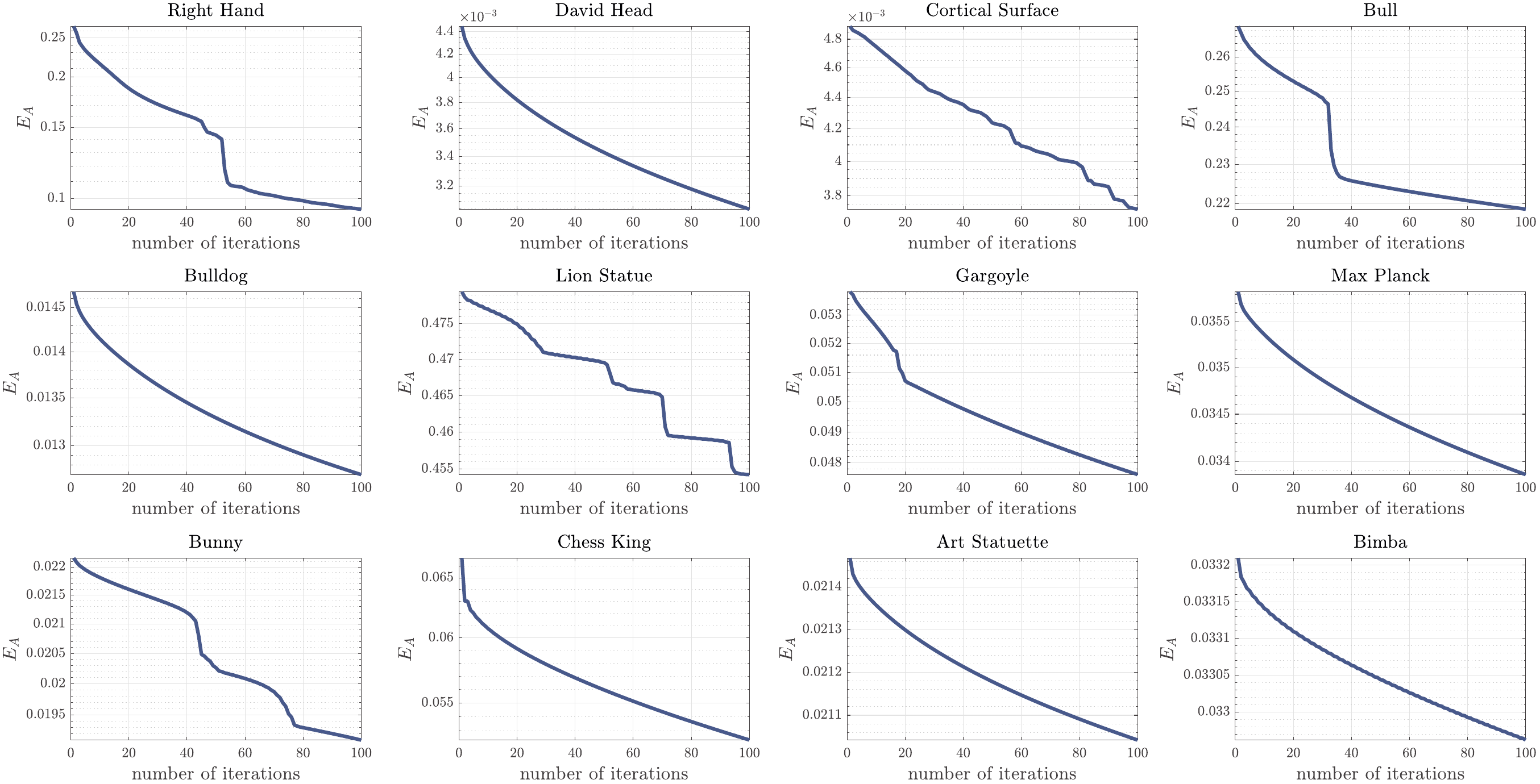}
  \caption{Convergence of the authalic energy for the benchmark mesh models, with the RGD method for minimizing the (normalized) stretch energy $E_{S}$. Line-search strategy: quadratic/cubic approximation from \protect{\cite[\S 6.3.2]{DennisSchnabel:1996}}.}
   \label{fig:All_models_NormalizedSE_LS_interpolant_alphaMax_1_MaxIter_100}
\end{figure}

A comparison of the computational times between the two line-search strategies shows that the line-search strategy with a quadratic/cubic interpolant (Table~\ref{tab:RGD_ES_interpolant}) is much more efficient than the line-search strategy that uses MATLAB's \texttt{fminbnd} (Table~\ref{tab:RGD_ES_fminbnd}). In many cases, the former even yields more accurate results than the latter. This is particularly evident for the Art Statuette (1\,111.67 s versus 654.57 s) and the Bimba Statue (2\,320.19 s versus 775.36 s) mesh models.

The numerical results show that, in general, RGD can still decrease energy as the number of iterations increases. 

Figure~\ref{fig:All_models_NormalizedSE_LS_interpolant_alphaMax_1_MaxIter_10000} shows the convergence behavior for the first three smallest mesh models considered, namely Right Hand, David Head, and Cortical Surface, when the algorithm is run for many more iterations, here 10\,000 iterations. It shows that RGD keeps decreasing the authalic energy $E_{A}$, albeit very slowly so. The values of SD/Mean are also improved for all three mesh models. The corresponding numerical results are reported in Table~\ref{tab:RGD_ES_interpolant_10000}; results for 100 iterations are also reported for easier comparison.

\begin{table}[htbp]
   \caption{RGD for minimizing the (normalized) stretch energy $E_{S}$. Line-search strategy: quadratic/cubic approximation from \protect{\cite[\S 6.3.2]{DennisSchnabel:1996}}. Results for 10\,000 iterations for the three smallest mesh models considered.} 
   \label{tab:RGD_ES_interpolant_10000}
   \center
   \resizebox{\textwidth}{!}{
      \begin{tabular}{@{} *{9}{c}}
            \toprule
            & \multicolumn{4}{c}{$100$ Iterations} & \multicolumn{4}{c}{$10\,000$ Iterations} \\
            \cmidrule(lr){2-5}  \cmidrule(lr){6-9}
            Model Name &   SD/Mean  &  $E_{A}(f)$  &  Time  & \#Fs  &   SD/Mean  &  $E_{A}(f)$  &  Time  & \#Fs \\
            \cmidrule(r){1-1}  \cmidrule(lr){2-5}  \cmidrule(lr){6-9}
            {\footnotesize Right Hand}    &   0.1204   &  $ 9.40 \times 10^{-2}$  &    4.07    &       1    &   0.0545   &  $ 2.07 \times 10^{-2}$  &    431.28    &       0    \\
            {\footnotesize David Head}    &   0.0156   &  $ 3.04 \times 10^{-3}$  &    9.16    &       0    &   0.0029   &  $ 1.01 \times 10^{-4}$  &    1\,018.87    &       0    \\
            {\footnotesize Cortical Surface} &   0.0200   &  $ 3.72 \times 10^{-3}$  &   16.01    &       0    &   0.0045   &  $ 2.06 \times 10^{-4}$  &    1\,328.56    &       0    \\
            \bottomrule
      \end{tabular}
   }
\end{table}

\begin{figure}[htbp]
  \centering
  \includegraphics[width=\textwidth]{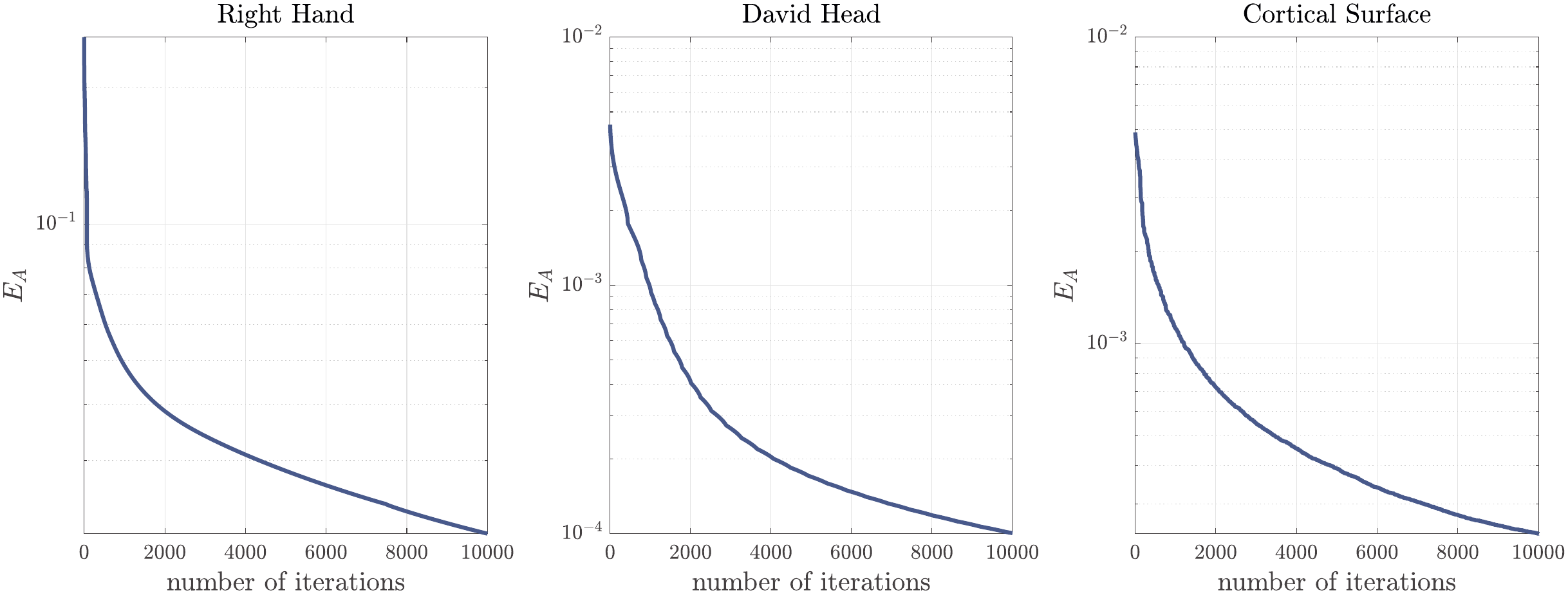}
  \caption{Convergence of the authalic energy for the three smallest benchmark mesh models, with the RGD method for minimizing the (normalized) stretch energy $E_{S}$. Line-search strategy: quadratic/cubic approximation from \protect{\cite[\S 6.3.2]{DennisSchnabel:1996}}. 10\,000 iterations.}
   \label{fig:All_models_NormalizedSE_LS_interpolant_alphaMax_1_MaxIter_10000}
\end{figure}

We compute with MATLAB the eigenvalues of the Hessian matrix at the minimizer.
Table~\ref{tab:eig_hessian} reports on the smallest eigenvalue of the Hessian of the initial mapping (produced by the first few iterations of the FPI method) and eigenvalues of the Hessian of the mapping when the stopping criterion of RGD is achieved.
The eigenvalues of the Hessian are computed by the MATLAB built-in function \texttt{eigs} with the option \texttt{smallestabs} and the number of eigenvalues to compute being 2\,000. 
We observe that the eigenvalues of the Hessian are significantly closer to zero after running the RGD method, as we expected.

\begin{table}[htbp]
   \caption{The smallest eigenvalue of the Hessian of the initial mapping given by FPI and after 10\,000 iterations of RGD.} 
   \label{tab:eig_hessian}
   \begin{center}
      \begin{tabular}{@{} *{3}{c}}
      \toprule
                          &   \multicolumn{2}{c}{Smallest Eigenvalue of the Hessian}  \\
            \cmidrule(lr){2-3}
            Model Name    &   After FPI  &  After RGD \\
            \hline
            Right Hand         &  $ -2.4429 \times 10^{0} $  &  $ -1.9013 \times 10^{-1} $  \\
            David Head         &  $  -1.8481 \times 10^{0} $  &  $  -2.8445 \times 10^{-6} $  \\
            Cortical Surface   &  $  -4.4940 \times 10^{-1} $  &  $  -3.6768 \times 10^{-3} $  \\
            \bottomrule
      \end{tabular}
   \end{center}
\end{table}

\subsection{Correction for bijectivity}\label{sec:bijectivity_correction}

The proposed RGD method does not guarantee the produced mapping is bijective. To remedy this drawback, we introduce a post-processing method to ensure bijectivity in the mapping. This is achieved by employing a modified version of the mean value coordinates, as described in~\protect{\cite{Floater:2003}}. 

Suppose we have a spherical mapping $f\colon \cM\to\Stwo$ that may not be bijective. First, we map the spherical image to the extended complex plane $\overline{\mathbb{C}} = \mathbb{C}\cup\{\infty\}$ by the stereographic projection 
\begin{equation} \label{eq:Ste}
\Pi_{\cS^2}(x,y,z) = \frac{x}{1-z} + \mathrm{i} \frac{y}{1-z}.
\end{equation}
Denote the mapping $h = \Pi_{\Stwo}\circ f$ and the complex-valued vector $\mathbf{h}_i = h(v_i)$, for $v_i\in\mathcal{V}(\cM)$. 
The folding triangular faces in the southern hemisphere are now mapped in $\mathbb{D}\subset\overline{\mathbb{C}}$, which can be unfolded by solving the linear systems
\begin{equation} \label{eq:meanValueLS}
[L_M(h)]_{\mathtt{I},\mathtt{I}} \widetilde{\mathbf{h}_{\mathtt{I}}} = -[L_M(h)]_{\mathtt{I},\mathtt{B}} \mathbf{h}_\mathtt{B},
\end{equation}
where $\mathtt{I} = \{ i \mid | h(v_i) | < r \}$ denotes the vertex index set with $r$ being a value slightly larger than $1$, e.g., $r=1.2$, $\mathtt{B} = \{1, \ldots, n\} \backslash \mathtt{I}$, and $L_{M}$ is the Laplacian matrix defined as
\begin{subequations} \label{eq:L_M}
\begin{equation} 
{[L_M(h)]}_{i,j} =
   \begin{cases}
   -\sum_{[v_i,v_j,v_k]\in\mathcal{F}(\cM)} [\omega_M(h)]_{i,j,k}  &\mbox{if $[{v}_i,{v}_j]\in\mathcal{E}(\cM)$,}\\
   -\sum_{\ell\neq i} [L_M(h)]_{i,\ell} &\mbox{if $j = i$,}\\
   0 &\mbox{otherwise,}
   \end{cases}
\end{equation}
with $\omega_M(h)$ being a variant of the mean value weight \cite{Floater:2003} defined as
\begin{equation}\label{eq:MVW}
[\omega_M(h)]_{i,j,k} = \frac{1}{\|\mathbf{h}_i - \mathbf{h}_j\|} \tan\frac{\varphi_{i,j}^k(h)}{2}
\end{equation}
\end{subequations}
in which $\varphi_{i,j}^k(h)$ is the angle opposite to the edge $[h(v_j), h(v_k)]$ at the point $h(v_i)$ on $h(\cM)$, as illustrated in Figure \ref{fig:MVW}. 
Then, the mapping is updated by replacing $\mathbf{h}_{\mathtt{I}}$ with $\widetilde{\mathbf{h}_{\mathtt{I}}}$. 
Next, an inversion 
\begin{equation} \label{eq:Inv}
\mathrm{Inv}(z) = \frac{1}{\bar{z}}
\end{equation}
is performed to reverse the positions of the southern and northern hemispheres. Then, the linear system \eqref{eq:meanValueLS} is solved again to unfold the triangular faces originally located in the northern hemisphere. We denote the updated mapping as $\widetilde{h}$. Ultimately, the corrected mapping is given by $\Pi_{\cS^2}^{-1}\circ \widetilde{h}$, where $\Pi_{\cS^2}^{-1}$ denotes the inverse stereographic projection
\begin{equation} \label{eq:invSte}
\Pi_{\cS^2}^{-1}(u+\mathrm{i}v) = \left( \frac{2 u}{u^2+v^2+1}, \ \frac{2 v}{u^2+v^2+1}, \ \frac{u^2+v^2-1}{u^2+v^2+1} \right).
\end{equation}

\begin{figure}
\center
\begin{tikzpicture}[thick,scale=1.2]
\coordinate (v_i) at (0,0);
\coordinate (v_j) at (0,2);
\coordinate (v_k) at (2,1);
\coordinate (v_l) at (-2,1);
\filldraw[green!20] (v_i) -- (v_j) -- (v_k);
\filldraw[green!20] (v_i) -- (v_j) -- (v_l);
\pic[draw, <-, "$~~\varphi_{i,j}^\ell(f)$", angle eccentricity=2]{angle = v_k--v_i--v_j};
\pic[draw, ->, "$\varphi_{i,j}^k(f)~~$", angle eccentricity=2]{angle = v_j--v_i--v_l};
\draw{
(v_i) -- (v_j) -- (v_k) -- (v_i) -- (v_l) -- (v_j)
};
\tikzstyle{every node}=[circle, draw, fill=yellow!20,
                        inner sep=1pt, minimum width=2pt]
\draw{
(0,0) node{$\f_i$}
(0,2) node{$\f_j$}
(2,1) node{$\f_\ell$}
(-2,1) node{$\f_k$}
};
\end{tikzpicture}
\caption{An illustration for the mean value weight \cite{Floater:2003}.}
\label{fig:MVW}
\end{figure}
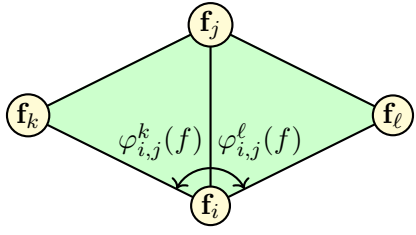

In practice, in our numerical experiments, we perform the bijectivity correction both after the FPI method and after the RGD method.

\subsection{Comparison with the FPI method}\label{sec:fixed_point_method}

To validate the algorithm, we compare it with the numerical results on the same models obtained using the FPI method. This method was proposed by Yueh et al.~\protect{\cite{Yueh:2019}} to calculate area-preserving boundary mapping while parameterizing 3-manifolds.

The FPI method of stretch energy minimization (SEM) is performed as follows. First, a spherical harmonic mapping $f^{(0)}\colon \cM\to\cS^2$ is computed by the Laplacian linearization method \cite{AHT:1999}. 
Suppose a spherical mapping $f^{(k)}$ is computed. 
Then, we map the spherical image to the extended complex plane by the stereographic projection \eqref{eq:Ste}
together with an inversion \eqref{eq:Inv}. 
Then, we define the mapping $h^{(k)}\colon \cM\to\mathbb{C}\cup\{\infty\}$
as
$$
h^{(k)}(v) = \mathrm{Inv} \circ\Pi_{\cS^2}\circ f^{(k)}(v).
$$
The mapping $h^{(k)}$ is represented as the complex-valued vector by $\mathbf{h}_i = h^{(k)}(v_i)$. 
Next, we define the interior vertex index set as
$$
\mathtt{I}^{(k)} = \{ i \mid | h^{(k)}(v_i) | < r \},
$$
which collects the indices of vertices that are located in the circle centered at the origin of radius $r$. 
Other vertices are defined as boundary vertices, and the associated index set is defined as 
$$
\mathtt{B}^{(k)} = \{1, \ldots, n\} \backslash \mathtt{I}^{(k)}.
$$
Then, the interior mapping is updated by solving the linear system
$$
[L_S(f^{(k)})]_{\mathtt{I}^{(k)},\mathtt{I}^{(k)}} \mathbf{h}^{(k+1)}_{\mathtt{I}^{(k)}} = -[L_S(f^{(k)})]_{\mathtt{I}^{(k)},\mathtt{B}^{(k)}} \mathbf{h}^{(k)}_{\mathtt{B}^{(k)}},
$$
while the boundary mapping remains the same, i.e., $\mathbf{h}^{(k)}_{\mathtt{B}^{(k+1)}} = \mathbf{h}^{(k)}_{\mathtt{B}^{(k)}}$.
The updated spherical mapping $f^{(k+1)}\colon\cM\to\cS^2$ is computed by
$$
f^{(k+1)}(v_i) = \Pi_{\cS^2}^{-1}(\mathbf{h}^{(k)}_i),
$$
where $\Pi_{\cS^2}^{-1}$ is the inverse stereographic projection \eqref{eq:invSte}.

\begin{algorithm}
\SetAlgoLined
Given a genus-zero closed mesh $\cM$, a tolerance $\varepsilon$, a radius $r$ (e.g., $\varepsilon=10^{-6}$, $r=1.2$)\;
 \KwResult{A spherical area-preserving parameterization $\mathbf{f}$.}
Compute a spherical conformal map $\mathbf{g}$ using the Laplacian linearization method \cite{AHT:1999}\;
Perform the stereographic projection  
$\mathbf{h}_\ell = \Pi_{\cS^2}(\mathbf{g}_\ell), \ell=1,\ldots,n$, as in \eqref{eq:Ste}\;
Let $\delta = \infty$\;
 \While{$\delta>\varepsilon$}{
     Update the matrix $L \leftarrow L_{S}({f})$, where $L_{S}(f)$ is defined as in \eqref{eq:L_S}\;
     Perform the inversion $\mathbf{h}_\ell \gets \mathrm{Inv}(\mathbf{h}_\ell)$, $\ell = 1, \ldots, n$, as in \eqref{eq:Inv};
     Update the index sets $\mathtt{I}=\left\{i \,\left|\, |\mathbf{h}_i| < r \right.\right\} \text{ and } \mathtt{B} = \{1, \ldots, n\} \backslash \mathtt{I}$\;
     Update $\mathbf{h}$ by solving the linear system $L_{\mathtt{I},\mathtt{I}} \mathbf{h}_{\mathtt{I}} = - L_{\mathtt{I},\mathtt{B}} \mathbf{h}_{\mathtt{B}}$\;
     Update $\mathbf{h}\gets \mathbf{h} / \mathrm{median}_{\ell}\|\mathbf{h}_\ell\|$\;
     Let $\mathbf{f}_\ell\leftarrow\Pi_{\cS^2}^{-1}(\mathbf{h}_\ell)$ as in \eqref{eq:invSte}, $\ell=1, \ldots, n$\;
     Update $\delta \leftarrow {E}_{S}(\mathbf{g}) - {E}_{S}(\mathbf{f})$\;
     Update $\mathbf{g}\leftarrow \mathbf{f}$.
 }
 \caption{FPI method of the SEM \cite{Yueh:2019}.}\label{algo:FPI_SEM}
\end{algorithm}

\begin{table}[htbp]
   \caption{FPI method for minimizing the authalic energy $E_{A}$, using two different stopping criteria. \#Its denotes the number of iterations at which the energy started to increase.}
   \label{tab:FPI}
   \center
   \resizebox{\textwidth}{!}{
      \begin{tabular}{@{} *{10}{c}}
            \toprule
            & \multicolumn{5}{c}{Energy $E_{A}$ Increased} & \multicolumn{4}{c}{$100$ Iterations} \\
            \cmidrule(lr){2-6}  \cmidrule(lr){7-10}
            Model Name  &   SD/Mean  &  $E_{A}(f)$  &  Time &  {\footnotesize \#Fs}  & \#Its &   SD/Mean  &  $E_{A}(f)$  &  Time  & {\footnotesize \#Fs} \\
            \cmidrule(r){1-1}  \cmidrule(lr){2-6}  \cmidrule(lr){7-10}
            {\footnotesize Right Hand}       &   0.2050   &  $ 2.86 \times 10^{-1}$  &   0.08   & 12  &   6  &   0.4598   &  $ 2.92 \times 10^{0}$  &    1.35    &      67    \\
            {\footnotesize David Head}       &   0.0191   &  $ 4.66 \times 10^{-3}$  &   0.35   &  0  &   8  &   0.0169   &  $ 3.58 \times 10^{-3}$  &    4.30    &       0    \\
            {\footnotesize Cortical Surface} &   0.0220   &  $ 4.93 \times 10^{-3}$  &   0.85   &  0  &  15  &   0.0174   &  $ 3.21 \times 10^{-3}$  &    5.62    &       0    \\
            {\footnotesize Bull}             &   0.1504   &  $ 2.74 \times 10^{-1}$  &   1.29   &  8  &  18  &   0.1876   &  $ 4.59 \times 10^{-1}$  &    6.90    &      40    \\
            {\footnotesize Bulldog}          &   0.0381   &  $ 1.49 \times 10^{-2}$  &   2.61   &  0  &  10  &   0.1833   &  $ 3.99 \times 10^{-1}$  &   22.22    &      53    \\
            {\footnotesize Lion Statue}      &   0.1940   &  $ 5.10 \times 10^{-1}$  &   1.12   &  1  &   4  &   0.2064   &  $ 5.28 \times 10^{-1}$  &   23.67    &      38    \\
            {\footnotesize Gargoyle}         &   0.0704   &  $ 5.47 \times 10^{-2}$  &   2.64   &  0  &  11  &   4.1020   &  $ 4.85 \times 10^{2}$   &   36.10    &     1955   \\
            {\footnotesize Max Planck}       &   0.0544   &  $ 3.67 \times 10^{-2}$  &   1.35   &  0  &   5  &   0.1844   &  $ 1.67 \times 10^{1}$   &   25.99    &     144    \\
            {\footnotesize Bunny}            &   0.0423   &  $ 2.24 \times 10^{-2}$  &   6.40   &  0  &  20  &   0.0394   &  $ 3.96 \times 10^{-2}$  &   35.78    &       2    \\
            {\footnotesize Chess King}       &   0.0713   &  $ 6.91 \times 10^{-2}$  &   6.35   &  9  &   8  &   1.0903   &  $ 1.79 \times 10^{1}$   &   88.04    &    1655    \\
            {\footnotesize Art Statuette}    &   0.0411   &  $ 2.15 \times 10^{-2}$  &  23.27   &  0  &   7  &   0.0908   &  $ 1.07 \times 10^{-1}$  &  342.95    &     126    \\
            {\footnotesize Bimba Statue}     &   0.0515   &  $ 3.32 \times 10^{-2}$  &  29.94   &  6  &   9  &   0.0932   &  $ 7.42 \times 10^{-2}$  &  305.00    &     144    \\
            \bottomrule
      \end{tabular}
      }
\end{table}

Table~\ref{tab:FPI} reports the numerical results for the FPI method applied to the twelve benchmark mesh models considered. Two different stopping criteria are considered: increase in authalic energy (columns 2 to 6) and maximum number of iterations (columns 7 to 10).
From Table~\ref{tab:FPI}, it appears that performing more iterations of the FPI method does not necessarily improve the mapping $f$. In most of the cases, except for David Head and Cortical Surface, the authalic energy and the values of SD/Mean increase. This motivates us to use the FPI method only to calculate a good initial mapping and then switch to RGD.

\subsection{Comparison with the adaptive area-preserving parameterization}\label{sec:aapp}

In this section, we compare the numerical results of our RGD method with the adaptive area-preserving parameterization for genus-0 closed surfaces proposed by Choi et al.~\protect{\cite{CGK:2022}}. 
The computational procedure is summarized as follows. 
First, the mesh is punctured by removing two triangular faces $\tau_1$ and $\tau_2$ that share a common edge. Then, the FPI of the SEM \cite{YLWY:2019} is applied to compute an area-preserving initial mapping ${g}_0\colon \cM\backslash\{\tau_1,\tau_2\}\to\mathbb{D} \coloneqq \{\mathbf{x}\in\mathbb{R}^2 \mid \|\mathbf{x}\|_2\leq 1\}$. The Beltrami coefficient of the mapping ${g}_0$ is denoted by $\mu_{g_0}$. Next, a quasi-conformal map $g\colon \cM\backslash\{\tau_1,\tau_2\}\to\mathbb{D}$ with $\|\mu_g\|_\infty = \|\lambda\mu_{g_0}\|_\infty <1$ is constructed \cite{CLL:2015}. The scaling factor $\lambda\in[0,1]$ is chosen to be $0.2$ in practice. Then, the optimal mass transport mapping $h_r\colon \cM\backslash\{\tau_1,\tau_2\}\to r\mathbb{D}$ is computed by the method proposed by Zhao et al. \cite{ZSGK:2013}, with the optimal radius $r$ satisfying
$$
r = \argmin_r\int |\mu_{(h_r\circ rg)^{-1}(z)}|^2 \mathrm{d}z.
$$
The final spherical area-preserving parameterization is obtained by the composition mapping $f=\Pi_{\Stwo}^{-1}\circ h_r \circ rg$, where $\Pi_{\Stwo}^{-1}$ denotes the inverse stereographic projection \eqref{eq:invSte}.

Table~\ref{tab:Choi_et_al} reports on the numerical results. The algorithm of Choi et al. \cite{CGK:2022} was run with the default parameters found in the code package\footnote{Available at \url{https://www.math.cuhk.edu.hk/~ptchoi/files/adaptiveareapreservingmap.zip}.}. The only satisfactory results are those for the David Head mesh model. However, even in this case, the values of the monitored quantities (SD/Mean, $E_{A}(f)$, and computation time) do not compete with those obtained by running our RGD method for 10 iterations; compare with the fourth column of Table~\ref{tab:RGD_ES_interpolant}. 
Our method is much more efficient and accurate and provides mappings with better bijectivity.

\begin{table}[htbp]
   \caption{Adaptive area-preserving parameterization for genus-0 closed surfaces proposed by Choi et al. \cite{CGK:2022} applied to the twelve mesh models considered in this work. Stopping criterion: default values.} 
   \label{tab:Choi_et_al}
   \center
   \small{
      \begin{tabular}{ccccc}
            \toprule
            Model Name       &   SD/Mean  &  $E_{A}(f)$   &  Time    & \#Foldings \\
            \hline
            Right Hand       &   18.3283  &  $ 4.84 \times 10^{3}$   &      218.03  &      672  \\
            David Head       &   0.0426   &  $ 2.27 \times 10^{-2}$  &      298.76  &        0  \\
            Cortical Surface &   0.6320   &  $ 1.14 \times 10^{0}$   &      420.20  &       10  \\
            Bull             &   8.5565   &  $ 1.82 \times 10^{3}$   &       34.42  &      335  \\
            Bulldog          &   9.2379   &  $ 1.22 \times 10^{3}$   &      183.94  &      338  \\
            Lion Statue      &   0.2626   &  $ 8.96 \times 10^{-1}$  &     1498.91  &      540  \\
            Gargoyle         &   0.3558   &  $ 1.30 \times 10^{0}$   &     1483.35  &      571  \\
            Max Planck       &   11.6875  &  $ 1.49 \times 10^{3}$   &      195.39  &      575  \\
            Bunny            &   27.6014  &  $ 8.94 \times 10^{3}$   &      157.87  &      208  \\
            Chess King       &   11.8300  &  $ 1.65 \times 10^{3}$   &      608.55  &      948  \\
            Art Statuette    &  394.4414  &  $ 9.93 \times 10^{0}$   &     2284.79  &     2242  \\
            Bimba Statue     &    0.5110  &  $ 2.01 \times 10^{0}$   &  16\,773.34  &  11\,821  \\
            \bottomrule
      \end{tabular}
      }
\end{table}

\subsection{Numerical stability}\label{sec:numerical_stability}

In this section, we investigate the numerical stability of our scheme. To this aim, we introduce Gaussian random noise to the vertex normal of each vertex in every mesh model according to a given value of noise variance $\sigma_{\mathrm{noise}}$. We then re-run the entire algorithm, i.e., we first perform a few iterations of the FPI method (Algorithm~\ref{algo:FPI_SEM}) to obtain a mapping which is used as an initial mapping for the RGD method (Algorithm~\ref{algo:RGD}). We then calculate a relative error on the authalic energy, as defined below in~\eqref{eq:err-EA}. We can also repeat this procedure for different values of noise variance $\sigma_{\mathrm{noise}}$.

Figure~\ref{fig:LionStatue_with_Noise} shows the original mesh model of the Lion Statue (panel (a)) and two noisy versions (panels (b) and (c)).

\begin{figure}[htbp]
   \centering
   \subfloat[]{\includegraphics[height=5cm]{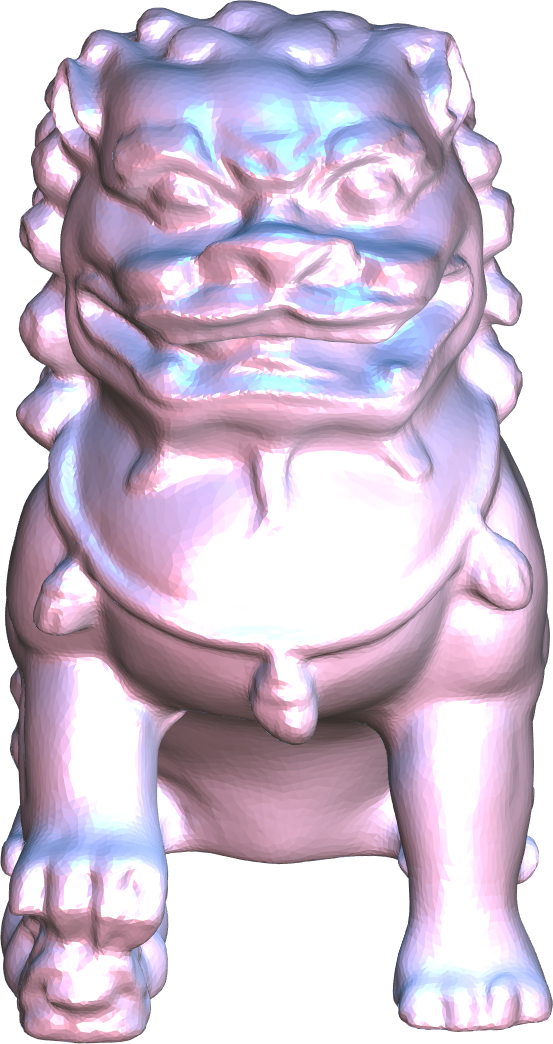}} \hspace{1cm}
   \subfloat[]{\includegraphics[height=5cm]{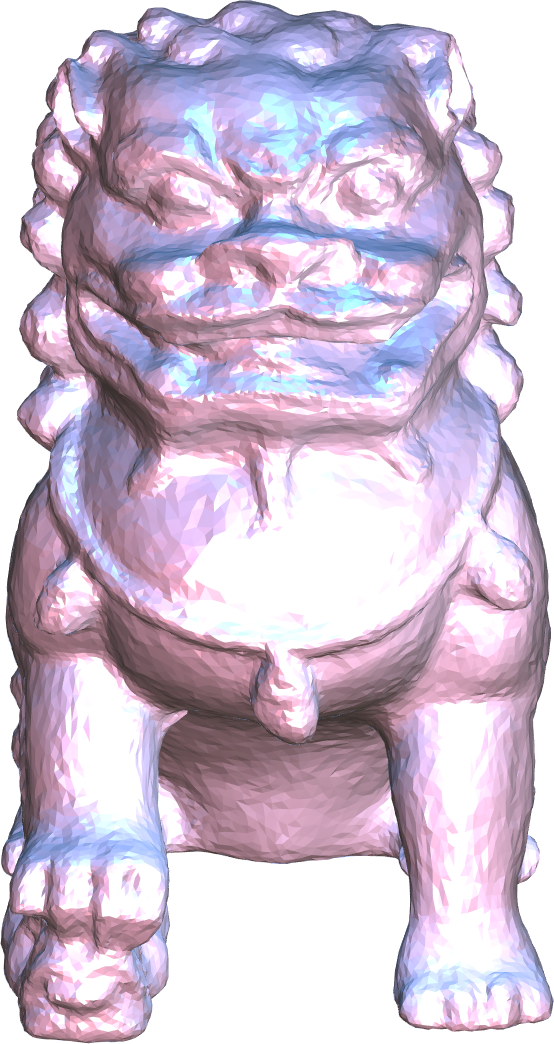}}  \hspace{1cm}
   \subfloat[]{\includegraphics[height=5cm]{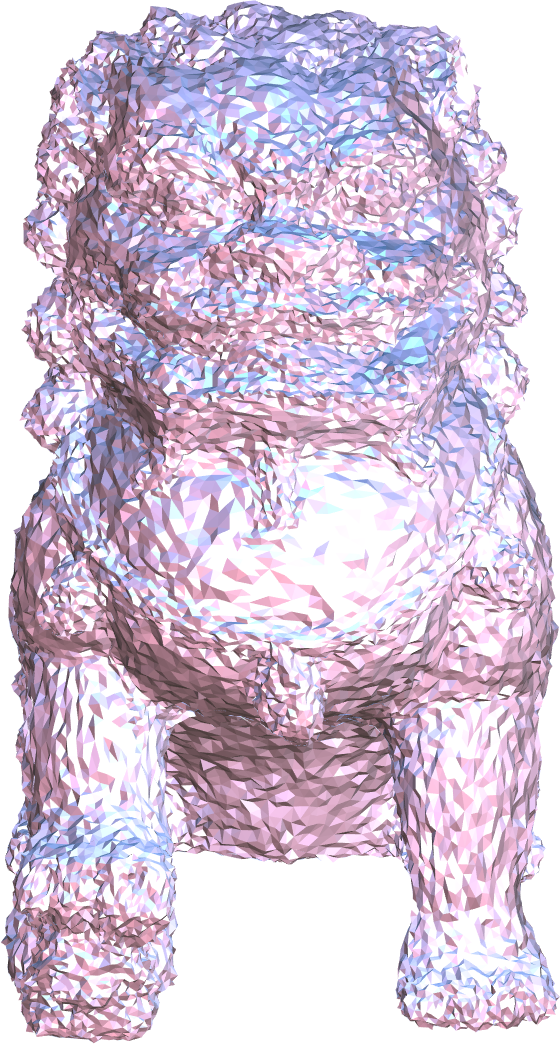}}
   \caption{The ``Lion Statue'' mesh model: (a) without noise; (b), (c) with noise variance $\sigma_{\mathrm{noise}} = 1 \times 10^{-3}$ and $5 \times 10^{-3}$, respectively.}
   \label{fig:LionStatue_with_Noise}
\end{figure}

We compute the following relative error on the authalic energy
\begin{equation}\label{eq:err-EA}
   \text{err-}E_{A}\big(f,\tilde{f}\big) \coloneqq \#\text{Vertices} \times \frac{\lvert {E}_{A}\big(\tilde{f}\big) - E_{A}(f) \rvert}{\sum_{v\in\mathcal{V}(\cM)} \| \widetilde{v} - v\|_2
   },
\end{equation}
where ${E}_{A}\big(\tilde{f}\big)$ and $E_{A}(f)$ are the authalic energies after 100 iterations of RGD for the mesh model with and without noise, respectively, and $\widetilde{v}$ and $v$ denote the coordinates of the vertices of the mesh model with and without noise, respectively.

Tables~\ref{tab:numerical_stability_1} and~\ref{tab:numerical_stability_2} report the numerical results for all the noisy mesh models, with $\sigma_{\mathrm{noise}} = 1 \times 10^{-3}$ and $5 \times 10^{-3}$, respectively. The values of authalic energy for the non-noisy mesh models from Table~\ref{tab:RGD_ES_interpolant} are reported in the second last column for easier comparison. We observe that the values for SD/Mean and $E_{A}\big(\tilde{f}\big)$ remain bounded and reasonable with respect to the original mesh models, which demonstrate that our method is stable to noise.

\begin{table}[htbp]
   \caption{Numerical stability study: 100 iterations of RGD with $\sigma_{\mathrm{noise}} = 1 \times 10^{-3} $. Line-search strategy: quadratic/cubic approximation from \protect{\cite[\S 6.3.2]{DennisSchnabel:1996}}. The quantity err-$E_{A}$ is defined in~\eqref{eq:err-EA}.} 
   \label{tab:numerical_stability_1}
   \center
   \resizebox{\textwidth}{!}{
      \begin{tabular}{@{} *{8}{c}}
            \toprule
            & \multicolumn{5}{c}{$\sigma_{\mathrm{noise}} = 1 \times 10^{-3} $} \\
            \cline{2-6}
            \multirow{2}{*}{Model Name}   &   \multirow{2}{*}{SD/Mean}  &  \multirow{2}{*}{$E_{A}\big(\tilde{f}\big)$}  &  \multirow{2}{*}{Time}  & \multirow{2}{*}{\#Fs}  &  \#Fs  & \multirow{2}{*}{$E_{A}(f)$} & \multirow{2}{*}{err-$E_{A}\big(f,\tilde{f}\big)$} \\
              &     &    &    &   &  {\footnotesize after b.c.}  &  \\
            \cmidrule(r){1-1}  \cmidrule(lr){2-6}  \cmidrule(lr){7-8}
            {\footnotesize Right Hand}       &   0.1254  &   $ 1.03 \times 10^{-1}$   &    3.13  &   14   &   1   &  $ 9.40 \times 10^{-2}$  &    $ 5.84 \times 10^{-2}$   \\
            {\footnotesize David Head}       &   0.0108  &   $ 1.44 \times 10^{-3}$   &    9.98  &    0   &   0   &  $ 3.04 \times 10^{-3}$  &    $ 4.47 \times 10^{-3}$   \\
            {\footnotesize Cortical Surface} &   0.0187  &   $ 3.82 \times 10^{-3}$   &   12.78  &    0   &   0   &  $ 3.72 \times 10^{-3}$  &    $ 3.45 \times 10^{-3}$   \\
            {\footnotesize Bull}             &   0.2528  &   $ 9.23 \times 10^{-1}$   &   16.74  &   19   &   6   &  $ 2.19 \times 10^{-1}$  &    $ 1.12 \times 10^{0} $   \\
            {\footnotesize Bulldog}          &   0.0273  &   $ 6.94 \times 10^{-3}$   &   59.26  &    0   &   0   &  $ 1.27 \times 10^{-2}$  &    $ 7.19 \times 10^{-6}$   \\
            {\footnotesize Lion Statue}      &   0.1630  &   $ 3.27 \times 10^{-1}$   &   66.78  &    1   &   0   &  $ 4.54 \times 10^{-1}$  &    $ 1.36 \times 10^{-4}$   \\
            {\footnotesize Gargoyle}         &   0.1677  &   $ 3.66 \times 10^{-1}$   &   65.72  &    0   &   0   &  $ 4.76 \times 10^{-2}$  &    $ 4.45 \times 10^{-3}$   \\
            {\footnotesize Max Planck}       &   0.0464  &   $ 2.67 \times 10^{-2}$   &   64.30  &    0   &   0   &  $ 3.39 \times 10^{-2}$  &    $ 5.27 \times 10^{-5}$   \\
            {\footnotesize Bunny}            &   0.0297  &   $ 1.11 \times 10^{-2}$   &   81.31  &    0   &   0   &  $ 1.91 \times 10^{-2}$  &    $ 5.32 \times 10^{-3}$   \\
            {\footnotesize Chess King}       &   0.0747  &   $ 6.08 \times 10^{-2}$   &  193.04  &  119   &  38   &  $ 5.23 \times 10^{-2}$  &    $ 1.03 \times 10^{-3}$   \\
            {\footnotesize Art Statuette}    &   0.0235  &   $ 6.88 \times 10^{-3}$   &  636.30  &    0   &   0   &  $ 2.10 \times 10^{-2}$  &    $ 2.54 \times 10^{-2}$   \\
            {\footnotesize Bimba Statue}     &   0.0541  &   $ 3.33 \times 10^{-2}$   &  759.36  &    2   &   0   &  $ 3.29 \times 10^{-2}$  &    $ 5.81 \times 10^{-4}$   \\
            \bottomrule
      \end{tabular}
    }
\end{table}

\begin{table}[htbp]
   \caption{Numerical stability study: 100 iterations of RGD with $\sigma_{\mathrm{noise}} = 5 \times 10^{-3} $. Line-search strategy: quadratic/cubic approximation from \protect{\cite[\S 6.3.2]{DennisSchnabel:1996}}. The quantity err-$E_{A}$ is defined in~\eqref{eq:err-EA}.} 
   \label{tab:numerical_stability_2}
   \center
   \resizebox{\textwidth}{!}{
      \begin{tabular}{@{} *{8}{c}}
            \toprule
            & \multicolumn{5}{c}{$\sigma_{\mathrm{noise}} = 5 \times 10^{-3} $} \\
            \cline{2-6}
            \multirow{2}{*}{Model Name}   &   \multirow{2}{*}{SD/Mean}  &  \multirow{2}{*}{$E_{A}\big(\tilde{f}\big)$}  &  \multirow{2}{*}{Time}  & \multirow{2}{*}{\#Fs}  &  \#Fs  & \multirow{2}{*}{$E_{A}(f)$} & \multirow{2}{*}{err-$E_{A}\big(f,\tilde{f}\big)$} \\
              &     &    &    &   &  {\footnotesize after b.c.}  &  \\
            \cmidrule(r){1-1}  \cmidrule(lr){2-6}  \cmidrule(lr){7-8}
            {\footnotesize Right Hand}       &   0.2304  &   $ 5.77 \times 10^{-1}$   &    3.12  &   10   &   4   &  $ 9.40 \times 10^{-2}$  &   $ 1.45 \times 10^{0}$   \\
            {\footnotesize David Head}       &   0.0211  &   $ 5.51 \times 10^{-3}$   &    9.43  &    0   &   0   &  $ 3.04 \times 10^{-3}$  &   $ 6.84 \times 10^{-3}$   \\
            {\footnotesize Cortical Surface} &   0.0301  &   $ 1.02 \times 10^{-2}$   &   12.72  &    0   &   0   &  $ 3.72 \times 10^{-3}$  &   $ 2.28 \times 10^{-1}$   \\        
            {\footnotesize Bull}             &   0.1723  &   $ 2.98 \times 10^{-1}$   &   17.40  &   18   &   4   &  $ 2.19 \times 10^{-1}$  &   $ 1.19 \times 10^{-1}$   \\
            {\footnotesize Bulldog}          &   0.0629  &   $ 3.86 \times 10^{-2}$   &   61.75  &    2   &   0   &  $ 1.27 \times 10^{-2}$  &   $ 3.23 \times 10^{-5}$   \\
            {\footnotesize Lion Statue}      &   0.1044  &   $ 1.36 \times 10^{-1}$   &   69.55  &    0   &   0   &  $ 4.54 \times 10^{-1}$  &   $ 3.36 \times 10^{-4}$   \\
            {\footnotesize Gargoyle}         &   0.0974  &   $ 9.68 \times 10^{-2}$   &   69.32  &    1   &   0   &  $ 4.76 \times 10^{-2}$  &   $ 6.87 \times 10^{-4}$   \\
            {\footnotesize Max Planck}       &   0.0782  &   $ 7.11 \times 10^{-2}$   &   67.30  &    0   &   0   &  $ 3.39 \times 10^{-2}$  &   $ 2.75 \times 10^{-4}$   \\
            {\footnotesize Bunny}            &   0.0455  &   $ 2.57 \times 10^{-2}$   &   80.93  &    0   &   0   &  $ 1.91 \times 10^{-2}$  &   $ 4.40 \times 10^{-3}$   \\
            {\footnotesize Chess King}       &   0.1521  &   $ 2.72 \times 10^{-1}$   &  187.57  &   95   &  35   &  $ 5.23 \times 10^{-2}$  &   $ 2.64 \times 10^{-2}$   \\
            {\footnotesize Art Statuette}    &   0.0981  &   $ 5.41 \times 10^{-2}$   &  617.75  &    0   &   0   &  $ 2.10 \times 10^{-2}$  &   $ 5.92 \times 10^{-2}$   \\
            {\footnotesize Bimba Statue}     &   0.1083  &   $ 1.25 \times 10^{-1}$   &  743.00  &   76   &   0   &  $ 3.29 \times 10^{-2}$  &   $ 1.68 \times 10^{-1}$   \\
            \bottomrule
      \end{tabular}
   }
\end{table}

\subsection{Registration problem between two brain surfaces}\label{sec:brain_registration}

A registration mapping between surfaces $\cM_0$ and $\cM_1$ refers to a bijective mapping $g \colon \cM_0\to\cM_1$. An ideal registration mapping keeps important landmarks aligned while preserving specified geometry properties. In this section, we demonstrate a framework for the computation of landmark-aligned area-preserving parameterizations of genus-zero closed surfaces. 

Suppose a set of landmark pairs $\{(p_i, q_i) \mid  p_i\in \cM_0, \ q_i\in\cM_1\}_{i=1}^m$ is given. The goal is to compute an area-preserving simplicial mapping $g\colon \cM_0\to\cM_1$ that satisfies $g(p_i) \approx q_i$, for $i=1, \ldots, m$. 
First, we compute area-preserving parameterizations $f_0\colon \cM_0\to\Stwo$ and $f_1\colon \cM_1\to\Stwo$ of surfaces $\cM_0$ and $\cM_1$, respectively. 
The simplicial registration mapping $h \colon \Stwo \to \Stwo $ that satisfies $h\circ f_0(p_i) = f_1(q_i)$, for $i=1, \ldots, m$, can be carried out by minimizing the registration energy
$$
E_R(h) = E_S(h) + \sum_{i=1}^m \lambda_i \| h\circ f_0(p_i) - f_1(q_i) \|^2.
$$
Let 
\[
   \mathbf{h} = 
   \begin{bmatrix}
      (h\circ f_0(v_1))\tr \\
      \vdots \\
      (h\circ f_0(v_n))\tr
   \end{bmatrix}
   =\begin{bmatrix}
      \mathbf{h}^1 & \mathbf{h}^2 & \mathbf{h}^3
   \end{bmatrix}
\]
be the matrix representation of $h$. 
The gradient with respect to $\mathbf{h}^s$ can be formulated as
\[
   \nabla_{\mathbf{h}^s} E_R(h) = 2 L_S(h)\, \mathbf{h}^s + \mathbf{r}^s,
\]
where $\mathbf{r}$ is the matrix of the same size as $\mathbf{h}$ given by
\[
   \mathbf{r}(i,:) =
   \begin{cases}
      2\lambda_i\left(\mathbf{h}(i,:) - (f_1(q_i))\tr\right) & \text{if $p_i$ is a landmark}, \\
      (0,0,0) & \text{otherwise}.
    \end{cases}
\]

In practice, we define the midpoints $c_i$ of each landmark pairs on $\Stwo$ as
\[
   c_i = \frac{1}{2}( f_0(p_i) + f_1(q_i)),
\]
for $i=1, \ldots, m$, and compute $h_0$ and $h_1$ on $\Stwo$ that satisfy $h_0\circ f_0(p_i) = c_i$ and $h_1\circ f_1(q_i) = c_i$, respectively. 
Ultimately, the registration mapping $g\colon \cM_0\to\cM_1$ is obtained by the composition mapping $g = f_1^{-1} \circ h_1^{-1} \circ h_0 \circ f_0$.
Figure~\ref{fig:scheme_surface_registration} schematizes this composition of functions for the landmark-aligned morphing process from one brain to another brain.

\begin{figure}[htbp]
\centering
  \begin{tikzpicture}[scale=1.2]
    \path     
      node (A) {$\cM_{0} $\hspace{2mm}\includegraphics[height=2cm]{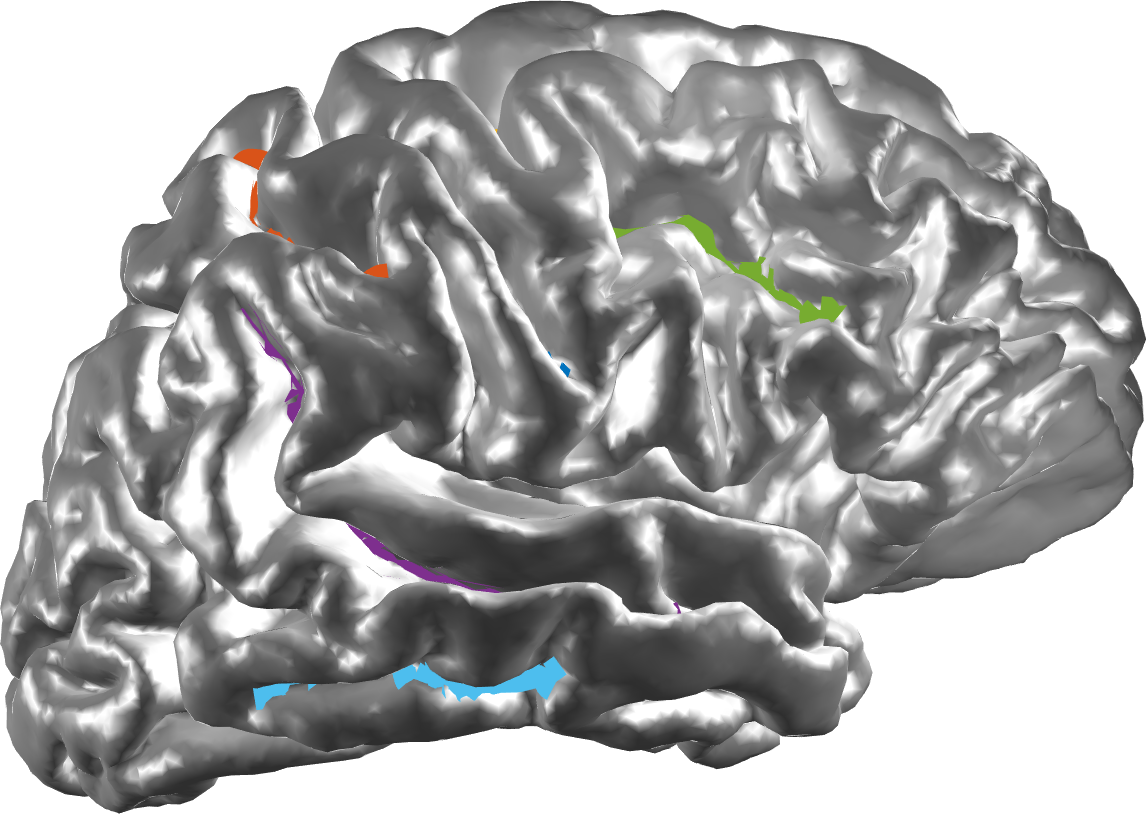}}
      (0:8cm)   node (B) {\includegraphics[height=2cm]{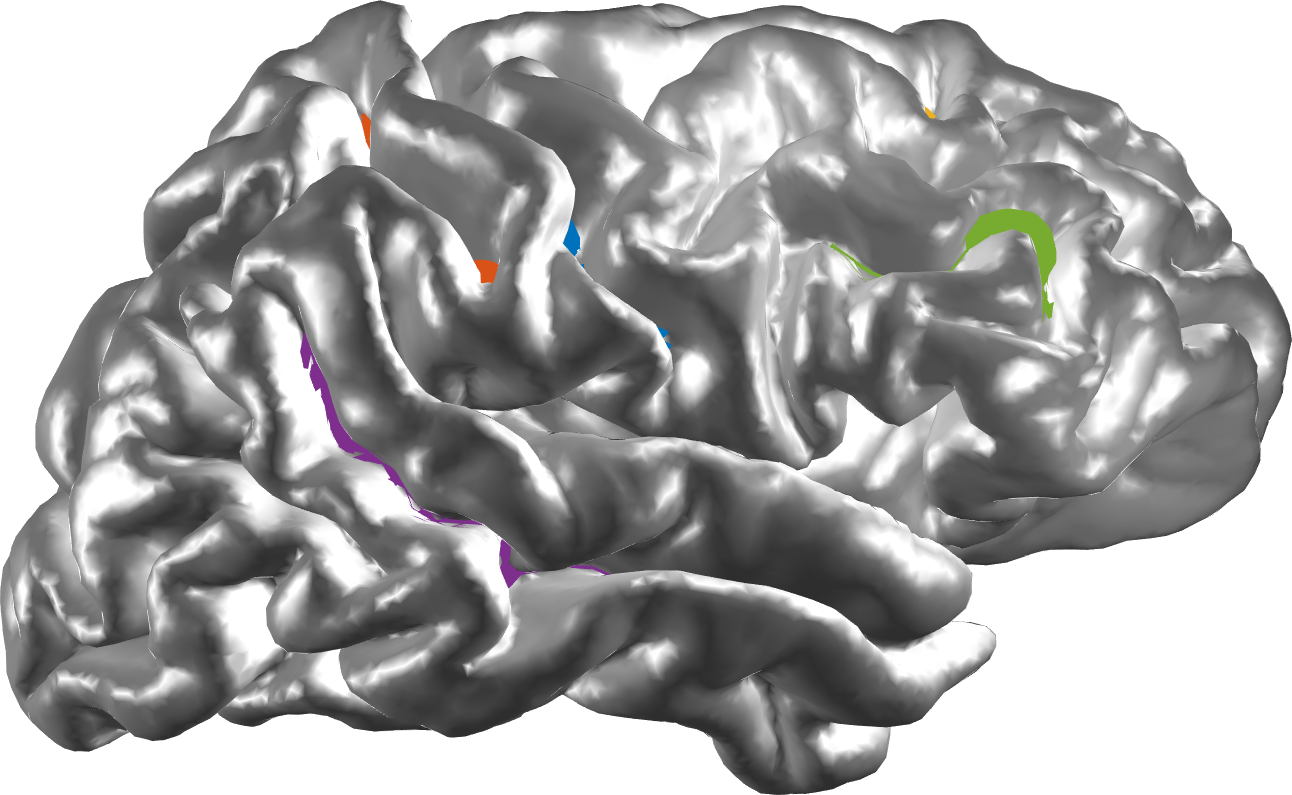}\hspace{-5mm}$\cM_{1}$}
      (-90:4cm) node (C) {$\Stwo$\includegraphics[height=2.2cm]{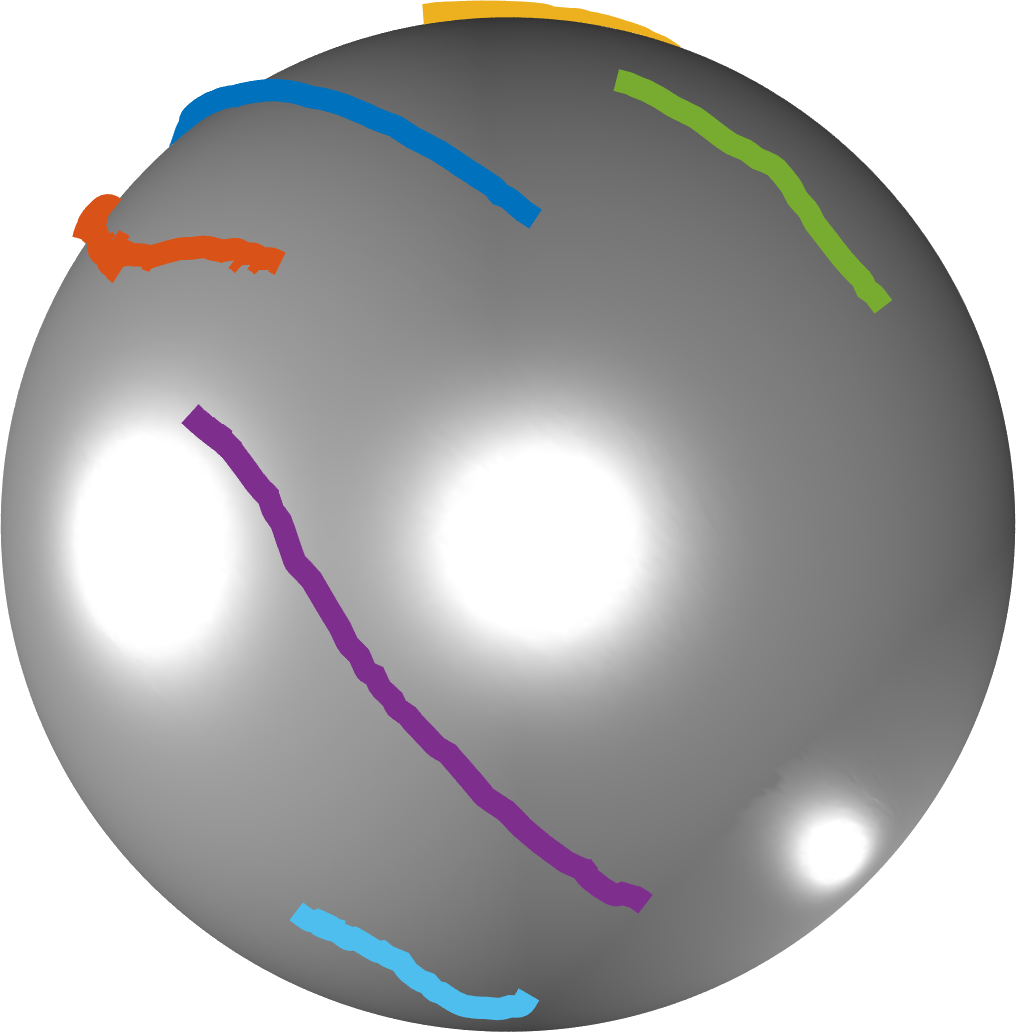}}
      (-26.5651:8.9442cm) node (D) {\includegraphics[height=2.2cm]{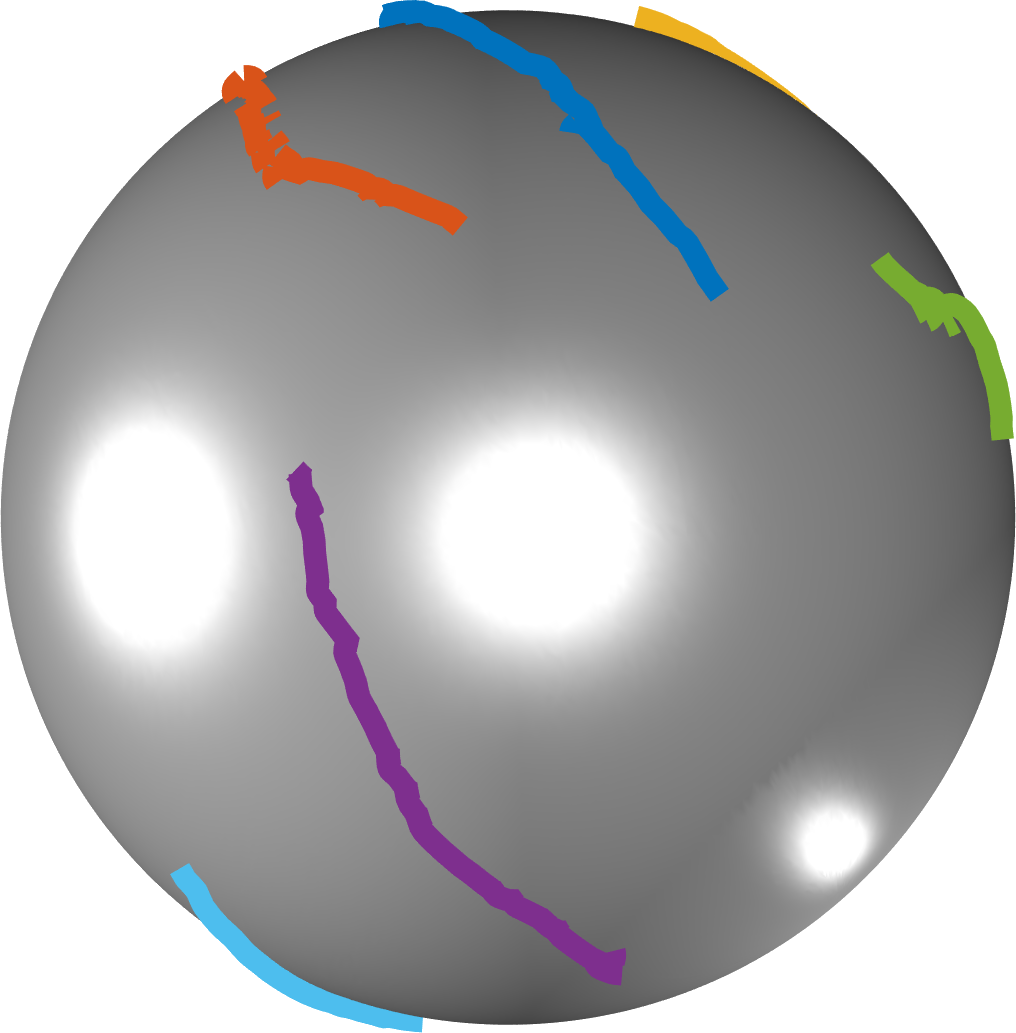}$\Stwo$}
      (-45:5.6568cm) node (E) {\includegraphics[height=2.2cm]{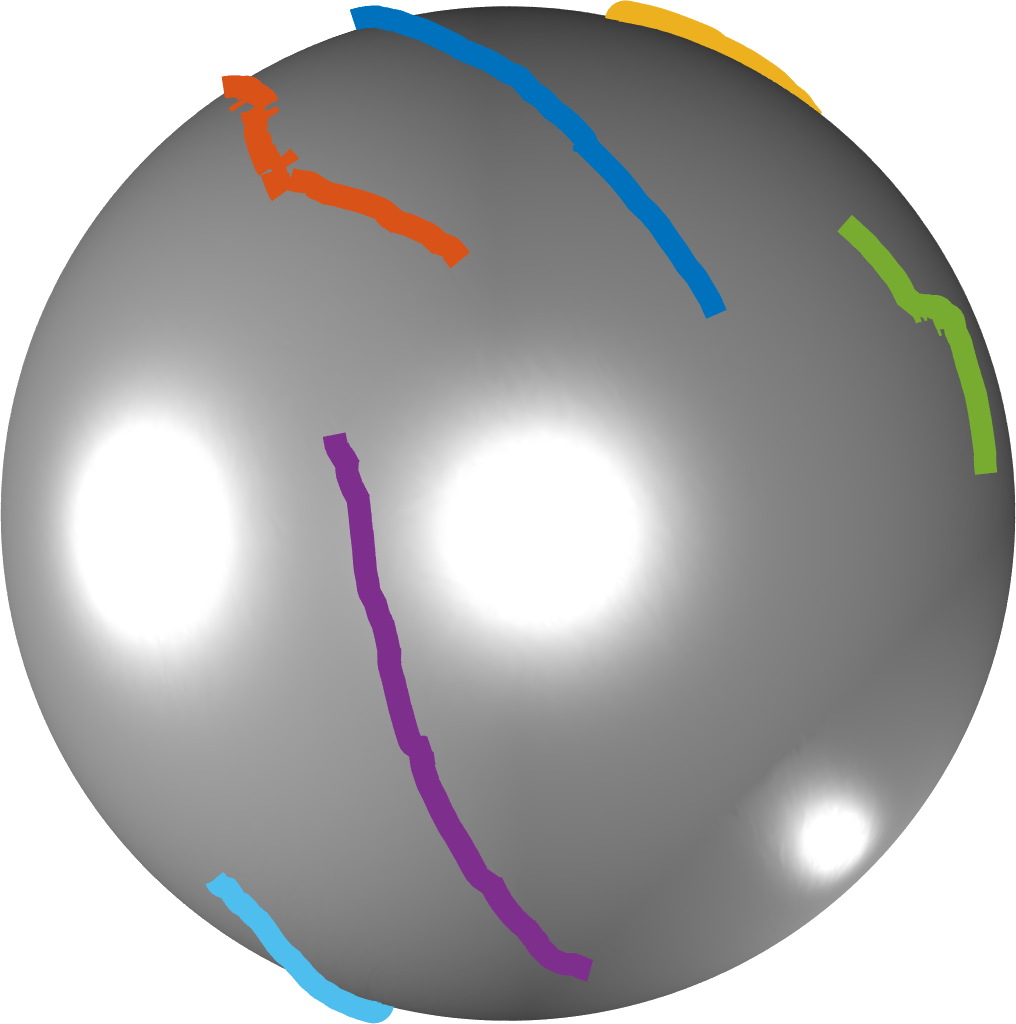}};
    \path[-To]
    (A) edge["{$g$}"]         node [right] {} (B)
    (A) edge["{$f_{0}$}"]     node [right] {} (C)
    (B) edge["{$f_{1}$}"]     node [right] {} (D)
    (C) edge["{$h_{0}$}"]     node [right] {} (E)
    (D) edge["{$h_{1}$}",style={sloped}]     node [right] {} (E);
    \end{tikzpicture}
   \caption{Scheme for the landmark-aligned surface registration application described in section \ref{sec:brain_registration}.}
\label{fig:scheme_surface_registration}
\end{figure}

A landmark-aligned morphing process from $\cM_0$ to $\cM_1$ can be constructed by the linear homotopy $H\colon \cM_0\times[0,1]\to\mathbb{R}^3$ defined as
\begin{equation} \label{eq:homotopy}
H(v,t) = (1-t) v + t g(v).
\end{equation}
In Figure \ref{fig:Homotopy}, we demonstrate the morphing process from one brain to another brain by four snapshots at four different values of $t$.
The brain surfaces are obtained from the source code package of \cite{CLL:2015}. 

\begin{figure}[htbp]
   \centering
   \resizebox{\textwidth}{!}{
   \begin{tabular}{cccc}
      \includegraphics[width=4cm]{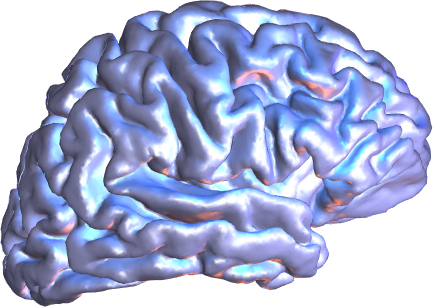} &
      \includegraphics[width=4cm]{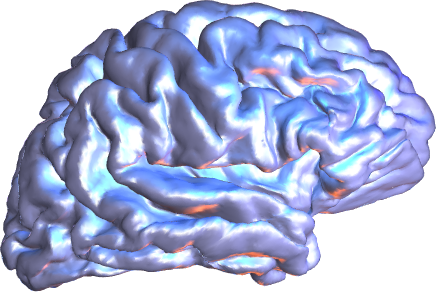} &
      \includegraphics[width=4cm]{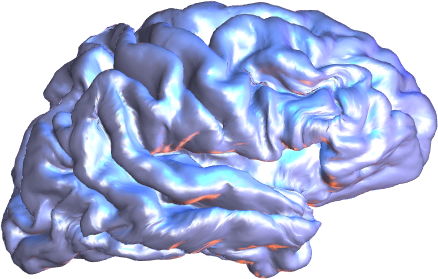} &
      \includegraphics[width=4cm]{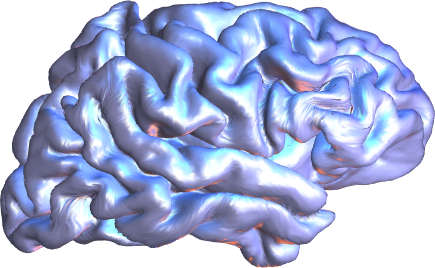} \\
      $H(\cM_0,0) = \cM_0$ & $H(\cM_0,0.33)$ & $H(\cM_0,0.67)$ & $H(\cM_0,1) = \cM_1$ 
   \end{tabular}
   }
   \caption{The images of the linear homotopy \eqref{eq:homotopy} from one brain to another brain.}
   \label{fig:Homotopy}
\end{figure}

\section{Conclusions and outlook}\label{sec:conclusions}

In this paper, we introduced an RGD method for computing spherical area-preserving mappings of genus-zero closed surfaces. Our approach combines the tools of Riemannian optimization and computational geometry to develop a method based on the minimization of the stretch energy. The proposed algorithm has theoretically guaranteed convergence and is accurate, efficient, and robust. We tested two different line-search strategies and conducted extensive numerical experiments on various mesh models to demonstrate the algorithm's stability and effectiveness. By comparing with two existing methods for computing area-preserving mappings, we demonstrated that our algorithm is more efficient than these state-of-the-art methods. Moreover, we show that our approach is stable and robust even when the mesh model undergoes small perturbations. Finally, we applied our algorithm to the practical problem of landmark-aligned surface registration between two human brain models.

There are some directions in which we could conduct further research. Specifically, we would like to enhance the speed of convergence of the algorithm we have proposed while keeping the computational cost low. One potential way to improve on this would be to use appropriate Riemannian generalizations of the conjugate gradient method or the limited memory BFGS (L-BFGS) method, as suggested in \cite{RingWirth:2012}.
Another research direction may target genus-one or higher genus closed surfaces.

\section*{Acknowledgments}

The work of the first author was supported by the National Center for Theoretical Sciences in Taiwan (R.O.C.) under the NSTC grant 112-2124-M-002-009-.
The work of the second author was supported by the National Science and Technology Council and the National Center for Theoretical Sciences in Taiwan.

\appendix

\section{Calculation of the gradient of the image area}\label{app:calculation_area}

The image area of the simplicial mapping $f$ can be calculated as follows. 
Let $\tau = [v_i,v_j,v_k]$ and denote $f_{ij}^\ell =f_{i}^\ell - f_{j}^\ell$, $f_{ik}^\ell =f_{i}^\ell - f_{k}^\ell$, and $f_{jk}^\ell =f_{ij}^\ell - f_{k}^\ell$, for $\ell=1,2,3$. 
The image area of a simplicial mapping $f$ can be formulated as
\[
\cA(f) = \sum_{\tau\in\cF(\cM)} |f(\tau)| 
= \sum_{\tau\in\cF(\cM)} \frac{1}{2} \sqrt{ 
\cA_{12}(f|_\tau)^2 + \cA_{13}(f|_\tau)^2 + \cA_{23}(f|_\tau)^2},
\]
where
$$
\cA_{12}(f|_{\tau}) = f_{ij}^1 f_{ik}^2 - f_{ij}^2 f_{ik}^1, \quad
\cA_{13}(f|_{\tau}) = f_{ij}^1 f_{ik}^3 - f_{ij}^3 f_{ik}^1, \quad
\cA_{23}(f|_{\tau}) = f_{ij}^2 f_{ik}^3 - f_{ij}^3 f_{ik}^2.
$$
The functionals $\cA_{12}$, $\cA_{13}$ and $\cA_{23}$ measure the image area of mappings $\Pi_{P_{xy}}\circ f$, $\Pi_{P_{xz}}\circ f$ and $\Pi_{P_{yz}}\circ f$, respectively.
The partial derivatives of $\cA(f|_{\tau})$ can be formulated as
\begin{align*}
\frac{\partial}{\partial f_i^1}\cA(f|_{\tau}) 
&= \frac{ 1}{4\cA(f|_{\tau})} \left( \cA_{12}(f|_{\tau}) f_{jk}^2 + \cA_{13}(f|_{\tau}) f_{jk}^3 \right), \\
\frac{\partial}{\partial f_i^2}\cA(f|_{\tau})
&= \frac{-1}{4\cA(f|_{\tau})} \left( \cA_{12}(f|_{\tau}) f_{jk}^1 - \cA_{23}(f|_{\tau}) f_{jk}^3 \right), \\
\frac{\partial}{\partial f_i^3}\cA(f|_{\tau}) 
&= \frac{-1}{4\cA(f|_{\tau})} \left( \cA_{13}(f|_{\tau}) f_{jk}^1 - \cA_{23}(f|_{\tau}) f_{jk}^2 \right), \\
\frac{\partial}{\partial f_j^1}\cA(f|_{\tau}) 
&= \frac{-1}{4\cA(f|_{\tau})} \left( \cA_{12}(f|_{\tau}) f_{ik}^2 + \cA_{13}(f|_{\tau}) f_{ik}^3 \right), \\
\frac{\partial}{\partial f_j^2}\cA(f|_{\tau})
&= \frac{ 1}{4\cA(f|_{\tau})} \left( \cA_{12}(f|_{\tau}) f_{ik}^1 - \cA_{23}(f|_{\tau}) f_{ik}^3 \right), \\
\frac{\partial}{\partial f_j^3}\cA(f|_{\tau})
&= \frac{ 1}{4\cA(f|_{\tau})} \left( \cA_{13}(f|_{\tau}) f_{ik}^1 + \cA_{23}(f|_{\tau}) f_{ik}^2 \right), \\
\frac{\partial}{\partial f_k^1}\cA(f|_{\tau})
&= \frac{ 1}{4\cA(f|_{\tau})} \left( \cA_{12}(f|_{\tau}) f_{ij}^2 + \cA_{13}(f|_{\tau}) f_{ij}^3 \right), \\
\frac{\partial}{\partial f_k^2}\cA(f|_{\tau})
&= \frac{-1}{4\cA(f|_{\tau})} \left( \cA_{12}(f|_{\tau}) f_{ij}^1 - \cA_{23}(f|_{\tau}) f_{ij}^3 \right), \\
\frac{\partial}{\partial f_k^3}\cA(f|_{\tau})
&= \frac{-1}{4\cA(f|_{\tau})} \left( \cA_{13}(f|_{\tau}) f_{ij}^1 + \cA_{23}(f|_{\tau}) f_{ij}^2 \right). 
\end{align*}

\section{Calculation of the Hessian of the stretch energy functional}

In this appendix, we describe the calculation of the Hessian of the stretch energy functional.
Let $\tau = [v_i, v_j, v_k]$. From \cite[Theorem 3.5 and (3.1)]{Yueh:2023}, we know that
\begin{align*}
\nabla_{\mathbf{f}^\ell} E_S(f|_\tau) 
= 2L_S(f|_\tau) \, \mathbf{f}|_\tau^\ell, ~ \ell = 1,2,3, 
\end{align*}
where
\begin{align*}
L_S(f|_\tau) = \frac{1}{4|\tau |} \begin{bmatrix} \begin{array}{c} (\mathbf{f}_i-\mathbf{f}_k)^\top (\mathbf{f}_j-\mathbf{f}_k)\\+(\mathbf{f}_i-\mathbf{f}_j)^\top (\mathbf{f}_k-\mathbf{f}_j) \end{array} & -(\mathbf{f}_i-\mathbf{f}_k)^\top (\mathbf{f}_j-\mathbf{f}_k) & - (\mathbf{f}_i-\mathbf{f}_j)^\top (\mathbf{f}_k-\mathbf{f}_j) \\[0.5cm]
-(\mathbf{f}_i-\mathbf{f}_k)^\top (\mathbf{f}_j-\mathbf{f}_k) & \begin{array}{c} (\mathbf{f}_i-\mathbf{f}_k)^\top (\mathbf{f}_j-\mathbf{f}_k) \\ +(\mathbf{f}_j-\mathbf{f}_i)^\top (\mathbf{f}_k-\mathbf{f}_i) \end{array} & -(\mathbf{f}_j-\mathbf{f}_i)^\top (\mathbf{f}_k-\mathbf{f}_i) \\[0.5cm]
-(\mathbf{f}_i-\mathbf{f}_j)^\top (\mathbf{f}_k-\mathbf{f}_j) & -(\mathbf{f}_j-\mathbf{f}_i)^\top (\mathbf{f}_k-\mathbf{f}_i) & \begin{array}{c} (\mathbf{f}_i-\mathbf{f}_j)^\top (\mathbf{f}_k-\mathbf{f}_j) \\
+ (\mathbf{f}_j-\mathbf{f}_i)^\top (\mathbf{f}_k-\mathbf{f}_i) \end{array} \end{bmatrix},
\end{align*}
and $\mathbf{f}|_\tau^\ell = (\mathbf{f}_i^\ell, \mathbf{f}_j^\ell, \mathbf{f}_k^\ell)^\top$, $\ell = 1,2,3$. 
A direct calculation yields that the Hessian matrix 
$$
\mathrm{Hess}(E_S(f|_{\tau})) = \begin{bmatrix}
\frac{\partial E_S(f|_\tau)}{\partial f_i^s \partial f_i^t} &
\frac{\partial E_S(f|_\tau)}{\partial f_i^s \partial f_j^t} &
\frac{\partial E_S(f|_\tau)}{\partial f_i^s \partial f_k^t} \\[0.5cm]
\frac{\partial E_S(f|_\tau)}{\partial f_j^s \partial f_i^t} &
\frac{\partial E_S(f|_\tau)}{\partial f_j^s \partial f_j^t} &
\frac{\partial E_S(f|_\tau)}{\partial f_j^s \partial f_k^t} \\[0.5cm]
\frac{\partial E_S(f|_\tau)}{\partial f_k^s \partial f_i^t} &
\frac{\partial E_S(f|_\tau)}{\partial f_k^s \partial f_j^t} &
\frac{\partial E_S(f|_\tau)}{\partial f_k^s \partial f_k^t} 
\end{bmatrix}_{s,t=1}^3 \in\mathbb{R}^{9\times 9}
$$
can be formulated as
{\small
$$
\mathrm{Hess}(E_S(f|_{\tau})) = \frac{1}{2|\tau|}
\begin{bmatrix}
\mathbf{h}_{ijk}^2 {\mathbf{h}_{ijk}^2}\tr + \mathbf{h}_{ijk}^3 {\mathbf{h}_{ijk}^3}\tr &
\mathbf{h}_{ijk}^1 {\mathbf{h}_{ijk}^2}\tr - 2\mathbf{h}_{ijk}^2 {\mathbf{h}_{ijk}^1}\tr &
\mathbf{h}_{ijk}^1 {\mathbf{h}_{ijk}^3}\tr - 2\mathbf{h}_{ijk}^3 {\mathbf{h}_{ijk}^1}^\top \\[0.5cm]
\mathbf{h}_{ijk}^2 {\mathbf{h}_{ijk}^1}^\top - 2\mathbf{h}_{ijk}^1 {\mathbf{h}_{ijk}^2}^\top &
\mathbf{h}_{ijk}^1 {\mathbf{h}_{ijk}^1}^\top + \mathbf{h}_{ijk}^3 {\mathbf{h}_{ijk}^3}^\top &
\mathbf{h}_{ijk}^2 {\mathbf{h}_{ijk}^3}^\top - 2\mathbf{h}_{ijk}^3 {\mathbf{h}_{ijk}^2}^\top \\[0.5cm]
\mathbf{h}_{ijk}^3 {\mathbf{h}_{ijk}^1}^\top - 2\mathbf{h}_{ijk}^1 {\mathbf{h}_{ijk}^3}^\top &
\mathbf{h}_{ijk}^3 {\mathbf{h}_{ijk}^2}^\top - 2\mathbf{h}_{ijk}^2 {\mathbf{h}_{ijk}^3}^\top &
\mathbf{h}_{ijk}^1 {\mathbf{h}_{ijk}^1}^\top + \mathbf{h}_{ijk}^2 {\mathbf{h}_{ijk}^2}^\top
\end{bmatrix},
$$}
where $\mathbf{h}_{ijk}^\ell = (f_j^\ell-f_k^\ell, f_k^\ell-f_i^\ell, f_i^\ell-f_j^\ell)\tr$, $\ell=1,2,3$.

\section{Line-search procedure of the RGD method}\label{sec:line_search}

In this appendix, we describe the line-search procedure used in our RGD method. We start by detailing how to compute the derivative appearing in the sufficient decrease condition, and then we describe the interpolant line-search strategy from \protect{\cite[\S 6.3.2]{DennisSchnabel:1996}}.

In Euclidean space $\R^{n}$, the steepest descent method updates a current iterate by moving in the direction of the anti-gradient, by a step size chosen according to an appropriate line-search rule, $ \x^{(k)} + \alpha \, \bd^{(k)} $. Recall the sufficient decrease condition~\protect{\cite{NW:2006}}
\begin{equation}\label{eq:suff_decrease_cond}
   \phi(\alpha_{k}) \leq \phi(0) + c_{1} \alpha_{k} \phi'(0).
\end{equation}
In the Euclidean setting, the univariate function $\phi(\alpha)$ in the line-search procedure is
\[
   \phi(\alpha) = E\big(\f^{(k)} + \alpha \, \bd^{(k)}\big).
\]
However, this function changes in the  Riemannian optimization framework because we cannot directly perform the vector addition $\f^{(k)} + \alpha \, \bd^{(k)}$ without leaving the manifold.

Let $\f^{(k)}$ be a point of $ \Stwon $ at the $k$th iteration of our RGD algorithm, and $ \Retraction_{\f^{(k)}} $ the retraction at $\f^{(k)}$, as defined in section~\ref{sec:retraction_on_S2n}. Let the objective function be the $E \colon \Stwon \to \R$. For a fixed point $\f^{(k)}$ and a fixed tangent vector $ \bd^{(k)} $, we introduce the vector-valued function $ \boldsymbol{\psi} \colon \R \to \Stwon $, defined by $ \alpha \mapsto \Retraction_{\f^{(k)}}(\alpha\,\bd^{(k)}) $. Hence we have $ \phi \colon \R \to \R $, defined by
\[
   \phi(\alpha) \coloneqq E(\boldsymbol{\psi} (\alpha)).
\]
Note that $\phi(0) = E(\boldsymbol{\psi}(0)) = E(\f^{(k)}) \eqqcolon E^{(k)} $, due to the definition of retraction.

To evaluate the sufficient decrease condition \eqref{eq:suff_decrease_cond}, we need to calculate $\phi'(0)$, and in turn, to compute $\phi'(0)$, we need to calculate the derivative of the retraction $\Retraction_{\f^{(k)}} (\alpha \, \bd^{(k)}) $ with respect to $\alpha$. The derivative of $\phi(\alpha) $ is given by the chain rule
\[
   \phi'(\alpha) = \nabla E(\boldsymbol{\psi}(\alpha))\tr  \boldsymbol{\psi}'(\alpha),
\]
and then we evaluate it at $\alpha = 0$, i.e.,
\[
   \phi'(0) = \nabla E(\f^{(k)})\tr  \boldsymbol{\psi}'(0).
\]
In general, the retraction and the derivative $\boldsymbol{\psi}'(\alpha)$ depend on the choice of the manifold. The differentials of a retraction provide the so-called \emph{vector transports}, and the derivative of the retraction on the unit sphere appears in~\protect{\cite[\S 8.1.2]{AMS:2008}}.

In the following formulas, we omit the superscript $^{(k)}$ referring to the current iteration of RGD and assume that the line-search direction $\bd$ is also partitioned as the matrix $\f$; see~\eqref{eq:representative_matrix}.
Recalling the retraction on the power manifold $ \Stwon $ from~\eqref{eq:retraction_Stwon}, we can write $\boldsymbol{\psi}(\alpha)$ as 
\[
   \boldsymbol{\psi}(\alpha) = \Retraction_{\f}(\alpha \, \bd) =
   \begin{bmatrix}
       \dfrac{1}{\|\f_{1} + \alpha \, \bd_{1}\|_{2}}  &  &  &  \\
         &  \dfrac{1}{\|\f_{2} + \alpha \, \bd_{2}\|_{2}}  &  &  \\
         &  & \ddots & \\
         &  &  &  \dfrac{1}{\|\f_{n} + \alpha \, \bd_{n}\|_{2}}
   \end{bmatrix}
   \begin{bmatrix}
       \f_{1}\tr + \alpha \, \bd_{1}\tr \\
       \f_{2}\tr + \alpha \, \bd_{2}\tr \\
       \vdots \\
       \f_{n}\tr + \alpha \, \bd_{n}\tr
   \end{bmatrix}.
\]
The derivative $\boldsymbol{\psi}'(\alpha)$ can be computed via the formula (cf. \protect{\cite[Example 8.1.4]{AMS:2008}})
\[
    \boldsymbol{\psi}'(\alpha) =
    \begin{bmatrix}
        -\dfrac{(\f_{1} + \alpha \, \bd_{1})\tr \bd_{1}}{\|\f_{1} + \alpha \, \bd_{1}\|_{2}^{3}} \, (\f_{1} + \alpha \, \bd_{1})\tr  \\[12pt]
        -\dfrac{(\f_{2} + \alpha \, \bd_{2})\tr \bd_{2}}{\|\f_{2} + \alpha \, \bd_{2}\|_{2}^{3}} \, (\f_{2} + \alpha \, \bd_{2})\tr \\[12pt]
        \vdots \\[12pt]
        -\dfrac{(\f_{n} + \alpha \, \bd_{n})\tr \bd_{n}}{\|\f_{n} + \alpha \, \bd_{n}\|_{2}^{3}} \, (\f_{n} + \alpha \, \bd_{n})\tr
   \end{bmatrix} +
   \begin{bmatrix}
       \dfrac{\bd_{1}\tr}{\|\f_{1} + \alpha \, \bd_{1}\|_{2}} \\[12pt]
       \dfrac{\bd_{2}\tr}{\|\f_{2} + \alpha \, \bd_{2}\|_{2}} \\[12pt]
       \vdots \\[12pt]
       \dfrac{\bd_{n}\tr}{\|\f_{n} + \alpha \, \bd_{n}\|_{2}}
   \end{bmatrix},
\]
At $\alpha=0$, this simplifies into
\[
    \boldsymbol{\psi}'(0) =
   \begin{bmatrix}
       \dfrac{\bd_{1}\tr}{\|\f_{1} \|_{2}} \\[12pt]
       \dfrac{\bd_{2}\tr}{\|\f_{2} \|_{2}} \\[12pt]
       \vdots \\[12pt]
       \dfrac{\bd_{n}\tr}{\|\f_{n} \|_{2}}
   \end{bmatrix}
   -
    \begin{bmatrix}
        \dfrac{(\f_{1} )\tr \bd_{1}}{\|\f_{1} \|_{2}^{3}} \, \f_{1}\tr  \\[12pt]
        \dfrac{(\f_{2} )\tr \bd_{2}}{\|\f_{2} \|_{2}^{3}} \, \f_{2}\tr \\[12pt]
        \vdots \\[12pt]
        \dfrac{(\f_{n} )\tr \bd_{n}}{\|\f_{n} \|_{2}^{3}} \, \f_{n}\tr
   \end{bmatrix}.
\]
This is the value needed in order to evaluate the sufficient decrease condition~\eqref{eq:suff_decrease_cond}.

\subsection{Quadratic/cubic approximation}

At every step of our RGD algorithm, we want to satisfy the sufficient decrease condition~\eqref{eq:suff_decrease_cond}.
Here, we adopt the safeguarded quadratic/cubic approximation strategy described in~\protect{\cite[\S 6.3.2]{DennisSchnabel:1996}}.

In the following, we let $\alpha_{k}$ and $\alpha_{k-1}$ denote the step lengths used at iterations $k$ and $k-1$ of the optimization algorithm, respectively.
We denote the initial guess using $\alpha_{0}$. We suppose that the initial guess is given; alternatively, one can use \protect{\cite[(3.60)]{NW:2006}} as initial guess, i.e.,
\[
   \alpha_{0} = \frac{2(E^{(k)} - E^{(k-1)})}{\phi'(0)}.
\]

If $\alpha_{0}$ satisfies the sufficient decrease condition, i.e.,
\[
   \phi(\alpha_{0}) \leq \phi(0) + c_{1} \alpha_{0} \phi'(0),
\]
then $\alpha_{0}$ is accepted as step length, and we terminate the search. Otherwise, we build a quadratic approximation $\phi_{\mathrm{q}}(\alpha)$ of $ \phi(\alpha) $ using the information we have, that is, $\phi(0)$, $\phi'(0)$, and $\phi(\alpha_{0})$. The quadratic model is
\[
   \phi_{\mathrm{q}}(\alpha) = \left[\phi(\alpha_{0}) - \phi(0) -\phi'(0) \, \alpha_{0}\right]\alpha^{2} + \phi'(0)\,\alpha + \phi(0).
\]
The new trial value $\alpha_{1}$ is defined as the minimizer of this quadratic, i.e.,
\[
   \alpha_{1} = - \frac{\phi'(0) \, \alpha_{0}^{2}}{2 \left[\phi(\alpha_{0}) - \phi(0) -\phi'(0) \, \alpha_{0}\right]}.
\]
We terminate the search if the sufficient decrease condition is satisfied at $\alpha_{1}$, i.e.,
\[
   \phi(\alpha_{1}) \leq \phi(0) + c_{1} \alpha_{1} \phi'(0).
\]
Otherwise, we need to backtrack again. We now have four pieces of information about $\phi(\alpha)$, so it is desirable to use all of them. Hence, at this and any subsequent backtrack step during the current iteration of RGD, we use a cubic model $\phi_{\mathrm{c}}(\alpha)$ that interpolates the four pieces of information $\phi(0)$, $\phi'(0)$, and the last two values of $\phi(\alpha)$, and set $\alpha_{k}$ to the value of $\alpha$ at which $\phi_{\mathrm{c}}(\alpha)$ has its local minimizer.

Let $\alpha_{\mathrm{prev}}$ and $\alpha_{\mathrm{2prev}}$ be the last two previous values of $\alpha_{k}$ tried in the backtrack procedure. The cubic that fits $\phi(0)$, $\phi'(0)$, $\phi(\alpha_{\mathrm{prev}})$, and $\phi(\alpha_{\mathrm{2prev}})$ is
\[
   \phi_{\mathrm{c}}(\alpha) = a\alpha^{3} + b\alpha^{2} + \phi'(0)\,\alpha + \phi(0),
\]
where 
\[
   \begin{bmatrix}
       a \\
       b
   \end{bmatrix} =
   \dfrac{1}{\alpha_{\mathrm{prev}}-\alpha_{\mathrm{2prev}}}
   \begin{bmatrix}
       \dfrac{1}{\alpha_{\mathrm{prev}}^{2}}   &  \dfrac{-1}{\alpha_{\mathrm{2prev}}^{2}} \\[14pt]
       \dfrac{-\alpha_{\mathrm{2prev}}}{\alpha_{\mathrm{prev}}^{2}}  &  \dfrac{\alpha_{\mathrm{prev}}}{\alpha_{\mathrm{2prev}}^{2}}
   \end{bmatrix}
   \begin{bmatrix}
       \phi(\alpha_{\mathrm{prev}}) - \phi(0) - \phi'(0) \, \alpha_{\mathrm{prev}} \\[6pt] 
       \phi(\alpha_{\mathrm{2prev}}) - \phi(0) - \phi'(0) \, \alpha_{\mathrm{2prev}}
   \end{bmatrix}.
\]
The local minimizer of this cubic is given by \protect{\cite[(6.3.18)]{DennisSchnabel:1996}}
\[
   \frac{-b+\sqrt{b^{2}-3a\phi'(0)}}{3a},
\]
and we set $\alpha_{k}$ equal to this value.
If necessary, this process is repeated, using a cubic interpolant of $\phi(0)$, $\phi'(0)$, and the two most recent values of $\phi$, namely, $\phi(\alpha_{\mathrm{prev}})$ and $\phi(\alpha_{\mathrm{2prev}})$, until a step size that satisfies the sufficient decrease condition is located.
Numerical experiments in section~\ref{sec:numerical_experiments} demonstrate the usage of this line-search technique.

\bibliographystyle{aomalpha}

\begin{small}
    \bibliography{Sphere_area_preserving.bib}
\end{small}


\end{document}